\newtheorem{theorem}{Theorem}
\newtheorem{lemma}[theorem]{Lemma}
\newtheorem{cor}[theorem]{Corollary}
\newtheorem*{Def}{Definition}
\begin{document}

\title{Seifert fibred knot manifolds}
 
\author{J.A.Hillman \and J.Howie}
\address{School of Mathematics and Statistics,
     University of Sydney,
     \newline
      NSW 2006,      Australia}
 \address{
Department of Mathematics and Maxwell Institute for Mathematical Sciences,
Heriot-Watt University, Edinburgh 
\newline
EH14 4AS, UK }

\email{jonathan.hillman@sydney.edu.au,j.howie@hw.ac.uk}

\begin{abstract}
We consider the question of when is the closed manifold
obtained by elementary surgery on an $n$-knot Seifert fibred 
over a 2-orbifold.
The possible bases are strongly constrained by the
fact that knot groups have weight 1.
We present a new family of 2-knots with solvable groups,
overlooked in earlier work.
The knots in this new family are neither invertible nor amphicheiral,
and the weight orbits for the knot groups are parametrized by $\mathbb{Z}$.
There are no known examples in higher dimensions.
\end{abstract}

\keywords{0-framed surgery, amphicheiral, invertible, knot, 
reflexive, Seifert fibred, 3-manifold, 4-manifold, torus knot}

\subjclass{57M25}

\maketitle

The knot manifolds of the title are the closed manifolds $M(K)=\chi(K,0)$ 
obtained by elementary surgery on $n$-knots $K$ in $S^{n+2}$.
We assume that the surgery is $0$-framed in the classical case $n=1$.
A higher dimensional knot is largely determined by its knot manifold,
together with the conjugacy class of a meridian in the knot group.
In the classical case 
this is probably not true.
(In particular, 
conjugacy classes in $\pi_1(M(K))$ do not 
determine isotopy classes of simple closed curves in $M(K)$.)
However important invariants of $K$, 
such as the Alexander module and the Blanchfield pairing, 
may be calculated in terms of $M(K)$, 
and whether $K$ is trivial, fibred, 
slice or DNC can each be detected by corresponding properties of $M(K)$.
(See \cite{Ga,Le}.)
Thus these manifolds have a privileged role.

The possible base orbifolds form a restricted class,
since the orbifold fundamental group must be the normal closure 
of a single element.
We adapt and extend the ideas of \cite{Ho} to the study of this condition.
The cases left open by our results seem unlikely.

Our main interest is in which 2-knot manifolds are Seifert fibred,
but we shall  consider briefly the other dimensions.
In the classical case there are also non-trivial Dehn surgeries,
parametrized by $\mathbb{Q}$.
If $K=K_{m,n}$ is the $(m,n)$-torus knot then the 3-manifolds 
$\chi(K,\frac{q}p)$ obtained by Dehn surgeries on $K$ are all Seifert fibred, 
with the sole exception of $\chi(K,\frac{-1}{mn})$,
which is the sum of two lens spaces \cite{Mo}.
For most other knots only finitely many Dehn surgeries give
Seifert manifolds, and much work has been done on 
establishing tight bounds on the 
numbers of such ``exceptional" Dehn surgeries for hyperbolic knots.
Our point of view is different, in that we concentrate on the case $\frac{p}q=0$.

In the course of constructing examples of 2-knot manifolds 
which are Seifert fibred over flat bases, 
we have discovered a family of knots with torsion-free polycyclic groups 
that were overlooked in an old result of
ours (Theorem  6.11 of \cite{Hi89}).
This gap does not materially affect much other work,
except that the claim in \cite{Hi11} 
that the classification of such knots is complete was unjustified.
We shall show here that none of the new knots is invertible or amphicheiral,
and the weight orbits for each knot group are parametrized by $\mathbb{Z}$.
Only the question of reflexivity remains undecided for these knots.

We know of no higher-dimensional examples, 
even if we allow the general fibre to be an 
orientable infrasolvmanifold of dimension $n$.

\section{some necessary conditions}

Let $K$ be a $n$-knot with knot group $\pi{K}$,
and let $\pi=\pi_1(M(K))$.
If $n=1$ then $\pi\cong\pi{K}/\langle\langle\lambda\rangle\rangle$ 
is the quotient of $\pi{K}$ by the normal closure of a longitude $\lambda$,
while if $n>1$ then $\pi\cong\pi{K}$.

Suppose that $M(K)$ is Seifert fibred over an aspherical 2-orbifold $B$.
Then $\pi$ is an extension of $\beta=\pi^{orb}(B)$ 
by a torsion free, 
virtually poly-$\mathbb{Z}$ group of Hirsch length $n$
and orientable type.
We shall show that the possible bases are strongly constrained
by the fact that quotients of knot groups have weight 1.
Much of this section is based on or extends ideas from  \cite{Ho}.

\begin{Def}
Let $\psi(x)$ be the distance to the nearest integer,
for $x\in\mathbb{R}$. Thus 
\[\psi(x)=x-\lfloor{x}\rfloor\quad\mathrm{ if}\quad{x-\lfloor{x}\rfloor\leq\frac12},
\]
while \[
\psi(x)=1-x+\lfloor{x}\rfloor\quad\mathrm{if}\quad{x-\lfloor{x}\rfloor>\frac12}.
\]
A triple $(\xi,\eta,\zeta)\in\mathbb{R}^3$ is {\it good\/} if
\[
2\max\{\psi(\xi),\psi(\eta),\psi(\zeta)\}<\psi(\xi)+\psi(\eta)+\psi(\zeta).
\]
\end{Def}

We shall use this concept to examine self-maps of $S^0=\{\pm1\}$,
which in turn can be used to determine the homotopy type 
of $S^1$-equivariant self-maps of $S^2$ (as in \cite{Ho}).

\begin{lemma}
Let $(\xi,\eta,\zeta)$ be a good triple of non-integer real numbers,
such that none of $\pm\xi\pm\eta\pm\zeta$ is an integer.
Define $\phi,\theta:S^0\to{S^0}$ by 
\[
\phi(\varepsilon)=(-1)^{\lfloor\xi+\eta+\varepsilon\zeta\rfloor}\quad\mathrm{and}\quad
\theta(\varepsilon)=(-1)^{\lfloor\xi-\eta+\varepsilon\zeta\rfloor}.
\]
Then $\phi\not=\theta$, and at least one of $\phi$, $\theta$ is a bijection.
\end{lemma}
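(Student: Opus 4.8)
The plan is to convert both assertions into inequalities among the reals $u=\psi(\xi)$, $v=\psi(\eta)$, $w=\psi(\zeta)$, and then to observe that the word \emph{good} is precisely the strict triangle inequality for the triple $(u,v,w)$. The starting point is the elementary identity $(-1)^{\lfloor t\rfloor}=\mathrm{sgn}(\sin\pi t)$ for non-integer $t$; by hypothesis this applies to each of the arguments $\xi\pm\eta\pm\zeta$ occurring in $\phi$ and $\theta$, so $\phi(\varepsilon)=\mathrm{sgn}\,\sin\pi(\xi+\eta+\varepsilon\zeta)$ and $\theta(\varepsilon)=\mathrm{sgn}\,\sin\pi(\xi-\eta+\varepsilon\zeta)$.

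A self-map of $S^0$ is a bijection iff its two values differ, so $\phi$ is a bijection iff $\phi(1)\phi(-1)=-1$. The product-to-sum formula gives $\sin\pi(\xi+\eta+\zeta)\,\sin\pi(\xi+\eta-\zeta)=\tfrac12\bigl(\cos2\pi\zeta-\cos2\pi(\xi+\eta)\bigr)$, and since $\cos2\pi t=\cos2\pi\psi(t)$ while $s\mapsto\cos2\pi s$ is strictly decreasing on $[0,\tfrac12]$, it follows that $\phi$ is a bijection iff $\psi(\xi+\eta)<\psi(\zeta)$; likewise $\theta$ is a bijection iff $\psi(\xi-\eta)<\psi(\zeta)$. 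Similarly $\sin\pi(\xi+\eta+\varepsilon\zeta)\,\sin\pi(\xi-\eta+\varepsilon\zeta)=\tfrac12\bigl(\cos2\pi\eta-\cos2\pi(\xi+\varepsilon\zeta)\bigr)$ shows that $\phi(\varepsilon)=\theta(\varepsilon)$ iff $\psi(\eta)<\psi(\xi+\varepsilon\zeta)$, so $\phi=\theta$ iff $\psi(\eta)<\psi(\xi+\zeta)$ and $\psi(\eta)<\psi(\xi-\zeta)$. The hypothesis that none of $\pm\xi\pm\eta\pm\zeta$ is an integer is exactly what makes all these comparisons strict (no relevant pair of cosines is equal).

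The bridge is the observation that for all reals $a,b$ one has $\{\psi(a+b),\psi(a-b)\}=\{\psi(\psi(a)+\psi(b)),\,|\psi(a)-\psi(b)|\}$ as sets, with $|\psi(a)-\psi(b)|\le\psi(\psi(a)+\psi(b))$ --- the inequality because $\psi(a),\psi(b)\le\tfrac12$ forces $\psi(a)+\psi(b)+|\psi(a)-\psi(b)|\le1$. Hence $\min\{\psi(\xi+\eta),\psi(\xi-\eta)\}=|u-v|$ and $\min\{\psi(\xi+\zeta),\psi(\xi-\zeta)\}=|u-w|$. Now \emph{good} says $2\max\{u,v,w\}<u+v+w$, i.e. each of $u,v,w$ is strictly below the sum of the other two, giving $|u-v|<w$, $|u-w|<v$ and $|v-w|<u$. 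Therefore $\min\{\psi(\xi+\eta),\psi(\xi-\eta)\}=|u-v|<w=\psi(\zeta)$, so at least one of $\psi(\xi+\eta),\psi(\xi-\eta)$ is $<\psi(\zeta)$ and hence at least one of $\phi,\theta$ is a bijection; and if $\phi=\theta$ then $\psi(\eta)<\min\{\psi(\xi+\zeta),\psi(\xi-\zeta)\}=|u-w|$, contradicting $|u-w|<v=\psi(\eta)$, so $\phi\ne\theta$.

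The points requiring care are the sign bookkeeping in the two product-to-sum expansions (which cosine becomes attached to $\zeta$, and which to $\xi\pm\eta$ or $\xi\pm\varepsilon\zeta$), and the degenerate cases of the bridging observation --- when $\psi(\xi)=\tfrac12$, or when two of $u,v,w$ coincide --- in which some of the two-element sets collapse to singletons. In each such case the displayed inequalities still hold (one may then get two bijections, or $|u-w|=0$), so the conclusion is unaffected. Everything else is routine verification.
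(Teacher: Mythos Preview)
Your proof is correct but takes a genuinely different route from the paper's. The paper argues by a direct symmetry reduction: adding an integer to any of $\xi,\eta,\zeta$, or flipping the sign of one or all of them, only multiplies $\phi,\theta$ by a common sign, swaps them, or swaps the roles of $\varepsilon=\pm1$, so one may assume $\xi,\eta,\zeta\in(0,\tfrac12)$; then the good condition gives $-1<\xi-\eta-\zeta<0<\xi-\eta+\zeta<1$ and $0<\xi+\eta-\zeta<1$, from which the floors are read off by inspection ($\theta$ is the identity and $\phi(-1)=+1\ne\theta(-1)$). Your argument avoids any normalisation: via $(-1)^{\lfloor t\rfloor}=\mathrm{sgn}\sin\pi t$ and product-to-sum identities you convert both assertions into inequalities among $u=\psi(\xi)$, $v=\psi(\eta)$, $w=\psi(\zeta)$, and then observe that ``good'' is exactly the strict triangle inequality for $(u,v,w)$, which through your bridging identity $\min\{\psi(a+b),\psi(a-b)\}=|\psi(a)-\psi(b)|$ delivers the needed comparisons. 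The paper's route is shorter and more elementary; yours is more structural, makes transparent why the definition of ``good'' is precisely what is required, and would adapt more readily if, say, one wanted to characterise when \emph{both} $\phi$ and $\theta$ are bijections.
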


\begin{proof}
Observe first that  adding an integer $n$ to any one of $\xi, \eta, \zeta$ has the effect 
of multiplying each of $\phi$, $\theta$ by $(-1)^n$.
Replacing $\eta$ by $-\eta$ has the effect of interchanging $\phi$ and $\theta$.
Replacing $\zeta$ by $-\zeta$ is equivalent to replacing $\varepsilon$ by $-\varepsilon$.
Finally, multiplying all three of $\xi,\eta,\zeta$ by $-1$ has the effect of changing 
the signs of $\phi$ and $\theta$. 
In each case, the problem remains the same.
Thus we may reduce to the case where $\xi,\eta,\zeta\in[0,\frac12]$.

Let us assume this is so.
By the definition of good, we see that $-1<\xi-\eta-\zeta<0$,
while $0<\xi-\eta+\zeta<1$, so $\theta$ is the identity map on $S^0$.
Similarly, $0<\xi+\eta-\zeta<1$, so that $\phi(-1)=+1$, and hence $\phi\not=\theta$.
\end{proof}

For convenience in what follows, we shall say that a positive integer $n$ is a 
{\it quasi-prime\/} if $n=4$ or $n$ is an odd prime.

\begin{lemma}
Let $a,b,c$ be distinct quasi-primes, and let $d,e,f\in\mathbb{Z}$ be integers 
coprime to $2a,2b,2c$, respectively. 
Assume that $\{a,b,c\}\not=\{3,4,5\}$.
Then there are positive integers $r,s,t$ which are coprime to $a,b,c$, respectively, 
and such that
\[
\frac{r}a+\frac{s}b+\frac{t}c<1
\quad{and}\quad(\frac{rd}{2a},\frac{se}{2b},\frac{tf}{2c})\quad{ is~good}.
\]
\end{lemma}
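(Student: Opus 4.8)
The plan is to rephrase ``good'' as a strict triangle inequality on three quantities we can steer, and then to run a case analysis on how many of $a,b,c$ equal $3$ or $4$. By the observation extracted in the proof of the preceding Lemma, a triple is good precisely when each of its three $\psi$-values is strictly less than the sum of the other two; equivalently, if the three values are listed in increasing order as $p\le q\le s$, the triple is good iff $s-q<p$. Put $u=\psi(\tfrac{rd}{2a})$, $v=\psi(\tfrac{se}{2b})$, $w=\psi(\tfrac{tf}{2c})$. Since $\gcd(d,2a)=1$ and $\gcd(r,a)=1$ we have $\gcd(rd,a)=1$, so $a\nmid rd$, and hence $\tfrac{rd}{2a}$ is neither an integer nor a half-integer; thus $0<u<\tfrac12$, and likewise $0<v,w<\tfrac12$. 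Therefore it is enough to make the two largest of $u,v,w$ differ by less than the smallest; in particular it suffices to trap $u,v,w$ in a band $[\lambda,\mu]$ with $\mu-\lambda<\lambda$, and it also suffices to make one of them tiny while making the other two nearly equal.

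Next I would analyse, for a single quasi-prime $n$ and an integer $k$ coprime to $2n$, the resource available: the set of pairs $\bigl(\tfrac rn,\ \psi(\tfrac{rk}{2n})\bigr)$ as $r$ ranges over positive integers coprime to $n$. As $r\mapsto rk\bmod 2n$ is a bijection of $\mathbb{Z}/2n$, every admissible value $\tfrac j{2n}$ occurs, and the issue is the cost $\tfrac rn$ of the cheapest $r$ realising it. For $n\in\{3,4\}$ this is a short table (for $n=3$ the only values are $\tfrac16,\tfrac13$; for $n=4$ they are $\tfrac18,\tfrac38$), and I would record the cheapest cost of each as a function of $k\bmod 2n$; one finds that some value is always available at cost $\tfrac13$ when $n=3$, and at cost $\tfrac14$ when $n=4$. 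For large $n$ I would instead use equidistribution of the progression $(rk\bmod 2n)_{r\ge 1}$: inside a window $r\in[\alpha n-C\sqrt n,\ \alpha n+C\sqrt n]$ one can find $r$ coprime to $n$ with $\psi(\tfrac{rk}{2n})$ within $O(n^{-1/2})$ of any target in $(0,\tfrac12)$, at cost $\alpha+O(n^{-1/2})$; so for large moduli essentially any $\psi$-value is attainable at essentially any prescribed positive cost. (One caveat to be dealt with: when $k\equiv\pm1\pmod{2n}$ the value $\psi(\tfrac{rk}{2n})=\tfrac r{2n}$ is rigidly tied to its cost $\tfrac rn=2\psi$, so the near-$\pm1$ sub-case needs separate bookkeeping.)

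I would then assemble the triple according to $N:=\#\bigl(\{a,b,c\}\cap\{3,4\}\bigr)$. If $N=0$ (all moduli large) aim all three at a common target in $(0,\tfrac16)$, say $\tfrac18$, each at cost about $\tfrac14$: then $u,v,w$ cluster near $\tfrac18$, the good condition holds, and $\tfrac ra+\tfrac sb+\tfrac tc\approx\tfrac34<1$. If $N=1$ the small modulus supplies a value $u_0$ from its two-element list at a cheap known cost, and I would aim the other two (large) moduli at the target $u_0$ at cost about $\tfrac14$ each, keeping the total below $1$ and invoking the first-paragraph criterion. If $N=2$, so that after relabelling $\{a,b\}=\{3,4\}$ and $c\ge5$, the budget is genuinely tight: $\tfrac r3+\tfrac s4+\tfrac tc<1$ essentially pins $r,s$, hence $u\in\{\tfrac16,\tfrac13\}$ and $v\in\{\tfrac18,\tfrac38\}$, before $t$ is chosen, and one must show that the residual freedom in $t$ (subject to $\tfrac tc<1-\tfrac r3-\tfrac s4$) always completes a good triple. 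This reduces to finitely many residue patterns of $d,e,f$ modulo $6$, $8$ and $2c$, and in that verification one finds that a good choice exists except when $\{a,b,c\}=\{3,4,5\}$.

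The hard part is precisely this tight case. With $u=\tfrac16$ and $v$ forced to $\tfrac38$ (which happens for half the residues of $e$), goodness demands $w>\tfrac38-\tfrac16=\tfrac5{24}$ while the budget leaves only $\tfrac tc<\tfrac5{12}$ for the third modulus; one has to exploit the residue of $f$ modulo $2c$, and where necessary pay the higher cost $\tfrac23$ for $u=\tfrac13$, to squeeze out a workable $w$, and the assertion is that this succeeds exactly outside the excluded triple. Organising this finite check cleanly, and making the equidistribution estimate of the middle step quantitative enough that it genuinely reduces everything else to finitely many triples, are the two places that demand care.
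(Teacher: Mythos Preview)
Your proposal is a strategy outline rather than a proof, and the two places you yourself flag as ``demanding care'' are in fact genuine gaps that the outline does not close.

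First, the equidistribution step. For a quasi-prime $n$ and $k$ coprime to $2n$, you assert that in a window of length $O(\sqrt n)$ around $\alpha n$ one can find $r$ coprime to $n$ with $\psi(rk/2n)$ within $O(n^{-1/2})$ of any prescribed target. This is an asymptotic statement; nothing in the lemma allows $a,b,c$ to be large. In the $N=0$ case the three moduli can be $5,7,11$, and there the granularity of the available $\psi$-values is $\tfrac1{10},\tfrac1{14},\tfrac1{22}$, which is not small compared to the target $\tfrac18$ you propose. Worse, the \emph{cost} of hitting a given $\psi$-value depends on $k$: for $a=5$, $d=3$ the value $\psi=\tfrac1{10}$ is only reached at $r=3$, cost $\tfrac35$, so ``aim all three at $\tfrac18$ at cost about $\tfrac14$'' simply fails. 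You acknowledge the extreme case $k\equiv\pm1$ but the difficulty is present for generic $k$ at small moduli, and your sketch gives no mechanism to handle it.

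Second, the $N=2$ ``finite verification'' is not finite. With $\{a,b\}=\{3,4\}$ the third modulus $c$ ranges over all odd primes $\ge7$, and the residue of $f$ lives in $\mathbb{Z}/2c\mathbb{Z}$, so the check you describe is an infinite family of cases. You would need a uniform argument in $c$ (together with a genuinely finite residual check for small $c$), and none is supplied; the sentence ``one finds that a good choice exists except when $\{a,b,c\}=\{3,4,5\}$'' is an assertion of the lemma itself, not a proof.

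By contrast, the paper's argument avoids any appeal to size or equidistribution. It normalises so that $\alpha=\tfrac d{2a}<\beta=\tfrac e{2b}<\gamma=\tfrac f{2c}$ lie in $(0,\tfrac12)$, takes the least multiplier $u$ with $u\alpha+\beta>\gamma$ (so $(u\alpha,\beta,\gamma)$ is automatically good), and observes that if the budget $\tfrac u a+\tfrac1b+\tfrac1c<1$ fails then chain inequalities force $d=1$ and $a<c$. A short case split on $e=1$ versus $e>1$, combined with the trick of replacing $\gamma$ by $\psi(2\gamma)=1-2\gamma$ (i.e.\ doubling the third multiplier), then finishes the job with explicit elementary estimates. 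The excluded triple $\{3,4,5\}$ emerges naturally from these inequalities rather than from an unperformed finite search. If you want to salvage your approach, you would need to replace the equidistribution heuristic by an explicit constructive choice valid for all odd primes $\ge5$, and to give a uniform argument for the $\{3,4,c\}$ family; the paper's multiplier-and-doubling technique is one clean way to do exactly that.
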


\begin{proof}
Since $\psi(x)=\psi(-x)=\psi(x+n)$, for all $x\in\mathbb{R}$ and $n\in\mathbb{Z}$,
we may assume without loss of generality that $0<d<a$, $0<e<b$ and $0<f<c$.
Let $\alpha=\frac{d}{2a}$, $\beta=\frac{e}{2b}$ and $\gamma=\frac{f}{2c}$.
Relabelling, if necessary, we may assume that $\alpha<\beta<\gamma$.

Define $u=\min\{\rho\in\mathbb{Z}:\rho\alpha+\beta>\gamma\}$. Then $u\geq1$ and 
\begin{equation}
u\alpha=(u-1)\alpha+\alpha<\gamma-\beta+\alpha<\gamma<\frac12,
\end{equation}
so $\psi(u\alpha)=u\alpha$ and $(u\alpha,\beta,\gamma)$ is good.
The result follows with $(r,s,t)=(u,1,1)$ unless
\begin{equation}
\frac{u}a+\frac1b+\frac1c\geq1.
\end{equation}
Assume therefore that inequality (2) holds.

If $d>1$ then $d\geq3$ since it is odd. Hence $3u<a$ by (1), whence (2) gives
\[
1\leq\frac13+\frac1b+\frac1c,
\]
contrary to hypothesis. Hence $d=1$.

Next note that $f\leq{c}-2$ since $f$ is odd and $\gamma<\frac12$.
Also $u\alpha<\gamma-\beta+\alpha$ by (1), and so
\[
\frac{u}a=2u\alpha\leq2\gamma-2\beta+2\alpha\leq1-\frac2c-\frac{e}b+\frac1a.
\]
Combining this with (2) gives
\begin{equation}
1\leq\frac{u}a+\frac1b+\frac1c\leq1-\frac1c-\frac{e-1}b+\frac1a\leq1-\frac1c+\frac1a.
\end{equation}
In particular, $a<c$.

\medskip
\noindent{\bf Case 1.}

Suppose that $e>1$.
Then $e\geq3$ since $e$ is odd, and by (3) we have
\[
\frac1a%
\ge \frac{e-1}b+\frac1c>\frac1b+\frac1c.
\]
Now by (1) and (2) we have
\[
\gamma>u\alpha=\frac{u}{2a}>\frac12(1-\frac1b-\frac1c)>\frac14.
\]
Hence
\begin{equation}
\psi(2\gamma)=1-2\gamma<\frac1b+\frac1c<\frac1a=2\alpha<\alpha+\beta.
\end{equation}
If $\psi(2\gamma)<\alpha$ define 
$v=\min\{\rho\in\mathbb{Z}:\rho\psi(2\gamma)+\alpha>\beta\}$.
Then $v\geq1$ and $(\alpha,\beta,2v\gamma)$ is good, and
\[
\frac{2v}c\leq{v}(1-2\gamma)=v\psi(2\gamma)<\beta<2\alpha=\frac1a.
\]
Hence
\[
\frac1a+\frac1b+\frac{2v}c<\frac2a+\frac1b<1,
\]
and the result follows with $(r,s,t)=(1,1,2v)$.

If $\alpha<\psi(2\gamma)<\alpha+\beta$ then $(\alpha,\beta,2\gamma)$ is good, and
\[
\frac1a+\frac1b+\frac2c<1,
\]
unless $\{a,b\}=\{3,5\}$ and $c=4$.
The result follows with $(r,s,t)=(1,1,2)$.

\medskip
\noindent{\bf Case 2.}

Suppose that $e=1$.
Then $\beta>\alpha$ implies that $b<a<c$.
In particular, if $(a,b,c)\not=(4,3,5)$ then $a\geq4$, $b\geq3$ and $c\geq7$.
Now
\[
\gamma>u\alpha=\frac{u}{2a}>\frac12(1-\frac1b-\frac1c)>\frac14.
\]
Hence
\[
\psi(2\gamma)=1-2\gamma<\frac1b+\frac1c\leq\frac{10}{21}.
\]
If $\psi(2\gamma)<\alpha$,
define $x=\min\{\sigma\in\mathbb{Z}:\sigma\psi(2\gamma)+\alpha>\beta\}$.
Then $x>0$ and $(\alpha,\beta, 2x\gamma)$ is good, and
\[
\frac{2x}c\leq{x}\psi(2\gamma)<\beta=\frac1{2b}.
\]
From this last inequality it follows that
\[
\frac1a+\frac1b+\frac{2x}c<\frac1a+\frac3{2b}<1,
\]
and the result follows with $(r,s,t)=(1,1,2x)$.

If $\alpha<\psi(2\gamma)<\beta$, 
define $y=\min\{\sigma\in\mathbb{Z}:\psi(2\gamma)+\sigma\alpha>\beta\}$.
Then $y>0$ and $(y\alpha,\beta,2\gamma)$ is good, and
\[
\frac2c\leq\psi(2\gamma)\leq\beta-(y-1)\alpha=\frac1{2b}-\frac{y-1}{2a}.
\]
From this last inequality it follows that
\[
\frac{y}a+\frac1b+\frac2c<\frac{y+1}{2a}+\frac3{2b}<\frac1{2a}+\frac4{2b}<1,
\]
and the result follows with $(r,s,t)=(y,1,2)$.

We are reduced to the case where $\alpha<\beta<\psi(2\gamma)$.
Now define $z=\min\{\sigma\in\mathbb{Z}:\sigma\alpha+\beta>\psi(2\gamma)\}$.
Then $z>0$ and $(z\alpha,\beta,2\gamma)$ is good.

Moreover, adding the inequalities
\[
(u-1)\alpha+\beta\leq\gamma
\]
and
\[
(z-1)\alpha+\beta\leq1-2\gamma=\psi(2\gamma)
\]
gives
\begin{equation}
(2u+z-3)\alpha+3\beta\geq\frac{(2+z-3)}a+\frac12\leq1.
\end{equation}
Combining (2) and (5) gives
\[
\frac{z}a+\frac1b+\frac2c\leq\frac12-\frac{2u-3}a+\frac1b+\frac2c\leq
\frac3a+\frac3b+\frac4c-\frac32\leq\frac34+\frac33+\frac47-\frac32<1.
\]
The result follows with $(r,s,t)=(z,1,2)$.
This completes the proof.
\end{proof}

Simple calculations show that the conclusion of Lemma 2 does not hold for
$(a,b,c)=(3,4,5)$ and $(d,e,f)=(1,1,3)$ or $(1,3,1)$.

The conclusions of Lemma 2 are enough to prove that certain groups have weight strictly greater than 1.

\begin{lemma}
Let $a,b,c$ be distinct quasi-primes, with $\{a,b,c\}\not=\{3,4,5\}$.
Then the group $G$ with presentation
$\langle{u,x,y,z}\mid{u^2=xyz},~x^a=y^b=z^c=1\rangle$ has weight $2$.
\end{lemma}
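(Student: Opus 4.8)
The plan is to establish the two bounds ``weight at most $2$'' and ``weight at least $2$'' separately; only the second uses Lemmas 1 and 2.

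For the upper bound I would simply exhibit two normal generators. Setting $u=x=1$ in the presentation forces $yz=1$ and $y^b=z^c=1$, so $z=y^{-1}$ and $y^{\gcd(b,c)}=y=1$, since distinct quasi-primes are coprime. Hence $G=\langle\langle u,x\rangle\rangle$ and the weight is at most $2$. For the lower bound I would argue by contradiction, supposing $G=\langle\langle g\rangle\rangle$ for a single element $g$. A quick Smith normal form computation, using that $a,b,c$ are pairwise coprime, shows that the relation matrix of $G^{\mathrm{ab}}$ (which has a $\pm1$ entry and determinant $\pm 2abc$) is equivalent to $\mathrm{diag}(1,1,1,2abc)$; thus $G^{\mathrm{ab}}\cong\mathbb Z/2abc$ is cyclic, generated by the image of $u$. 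In particular the image of $g$ must generate $G^{\mathrm{ab}}$, so $g$ lies outside the index-two subgroup $G_0=\ker(G\to\mathbb Z/2)$; here $G_0=\pi^{orb}(\widetilde B)$, where $\widetilde B=S^2(a,a,b,b,c,c)$ is the orientation double cover of the base orbifold $B$ (a projective plane with three cone points of orders $a,b,c$), and $u$ induces the deck involution. It is worth noting that $G_0^{\mathrm{ab}}$ is again cyclic (of order $abc$), so no abelian invariant of $G$ or of its double cover can detect weight $>1$; a genuinely two-dimensional argument is needed, and this is where \cite{Ho} enters.

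Following and extending the method of \cite{Ho}, a weight-one normal generator $g$ of $\pi^{orb}(B)$ can be analysed by means of an $S^1$-equivariant self-map of $S^2$ over $B$. Decomposing $B$ into small neighbourhoods of its three cone points and the complementary planar piece, this map is determined near the cone point of order $a$ by a rotation number $d$, necessarily coprime to $2a$ because $g$ must meet the exceptional fibre primitively, and similarly by $e$ modulo $2b$ and $f$ modulo $2c$; the winding of $g$ across the planar piece is recorded by positive multiplicities $r,s,t$ coprime to $a,b,c$. Realisability over the planar piece forces $\frac ra+\frac sb+\frac tc<1$, while on the fixed-point set $S^0=\{\pm 1\}$ of each cone neighbourhood the two resulting self-maps of $S^0$ must agree with the maps $\phi,\theta$ of Lemma 1 built from the triple $(\frac{rd}{2a},\frac{se}{2b},\frac{tf}{2c})$; in particular $\phi=\theta$. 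But since $\{a,b,c\}\neq\{3,4,5\}$, Lemma 2 lets me choose $r,s,t$ with $\frac ra+\frac sb+\frac tc<1$ for which that triple is good, and then Lemma 1 gives $\phi\neq\theta$ --- a contradiction. Hence no such $g$ exists, the weight of $G$ is at least $2$, and so it equals $2$. The hypothesis $\{a,b,c\}\neq\{3,4,5\}$ is essential and enters only through Lemma 2: for $(a,b,c)=(3,4,5)$ the ``bad'' parameter values $(d,e,f)=(1,1,3)$ and $(1,3,1)$ noted after Lemma 2 correspond to genuine weight-one normal generators of $\pi^{orb}(B)$.

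The main obstacle is the translation just sketched: making precise the dictionary, following and extending \cite{Ho}, between a normal generator $g$ of $\pi^{orb}(B)$ and the data of rotation numbers $(d,e,f)$ and multiplicities $(r,s,t)$, and verifying that the only constraints these are forced to satisfy are $\frac ra+\frac sb+\frac tc<1$ together with the coincidence $\phi=\theta$ of the associated self-maps of $S^0$ at each cone point. Once this is in place, Lemmas 1 and 2 close the argument with no further computation.
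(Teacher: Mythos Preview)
Your upper bound is fine (the paper uses $\{xy,u\}$ rather than $\{u,x\}$, but either pair works). The lower bound, however, has a genuine logical gap in the role you assign to $r,s,t$.

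In the paper's argument---which follows Howie's $SU(2)$-representation method---the integers $d,e,f$ are determined by the putative weight element $w$: they are the exponent-sum combinations $d=E_u(w)+2E_x(w)$, $e=E_u(w)+2E_y(w)$, $f=E_u(w)+2E_z(w)$, and the requirement that $w$ normally generate $G$ forces $(d,2a)=(e,2b)=(f,2c)=1$. By contrast, $r,s,t$ are \emph{not} attached to $w$ at all: they are parameters one is free to choose, entering only through the construction of a continuous family of representations $G\to SO(3)=SU(2)/\{\pm I\}$ in which $x,y,z$ map to elements of trace $2\cos(r\pi/a)$, $2\cos(s\pi/b)$, $2\cos(t\pi/c)$. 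The inequality $\frac ra+\frac sb+\frac tc<1$ is needed so that $XYZ\ne -I$ in $SU(2)$ and hence admits a continuous square root $U$; it is a condition required for the family of representations to exist, not a constraint that $w$ imposes. Lemma~2 is invoked precisely to guarantee that, given the $d,e,f$ coming from $w$, one \emph{can} choose $r,s,t$ satisfying both this inequality and the goodness condition. The contradiction is then obtained from the representation family: if $w$ were a weight element, the maps $(\ell,m,n)\mapsto\rho_{\ell,m,n}(w)$ would take values in $SU(2)\setminus\{\pm I\}$, but Lemma~1 together with Corollary~2.2 of \cite{Ho} shows that two of the restricted $S^1$-equivariant maps $S^2\to SU(2)\setminus\{\pm I\}$ lie in distinct homotopy classes, contradicting connectedness of the parameter space.

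Your account has $r,s,t$ first ``recorded by'' the winding of $g$ and then ``chosen'' via Lemma~2; these cannot both hold. The geometric picture you sketch---an $S^1$-equivariant map ``over $B$'' with rotation numbers and multiplicities read off from how $g$ crosses a decomposition of $B$, and with realisability over the planar piece forcing $\frac ra+\frac sb+\frac tc<1$---is not how the argument in \cite{Ho} or in this paper runs, and I do not see how to make it precise. Finally, your closing claim that for $(a,b,c)=(3,4,5)$ the values $(d,e,f)=(1,1,3)$ and $(1,3,1)$ ``correspond to genuine weight-one normal generators'' is unwarranted: the paper only observes that the conclusion of Lemma~2 fails there, and whether the group for $P^2(3,4,5)$ has weight one is explicitly left open (see case~(3) of Theorem~4 and the paragraph following it).
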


\begin{proof}
The group $G$ is clearly the normal closure of $\{xy,u\}$, and so has weight at most 2.
It shall suffice to show that $G$ cannot have weight 1.
Let $w$ be a word in $u,x,y,z$, and let $E_v(w)$ be the exponent sum of $v\in\{u,x,y,z\}$ in $w$.
Let $d=E_u(w)+2E_x(w)$, $e=E_u(w)+2E_y(w)$ and $f=E_u(w)+2E_z(w)$.

If $a$ divides $d$ then $w$ is contained in the normal closure of $\{y,z\}$ in $G$,
and so cannot be a weight element.
Analogous remarks hold if $b$ divides $e$ or $c$ divides $f$.
Similarly, if $E_u(w)$ is even then $w$ is in the normal closure of $\{x,y,z\}$,
and so cannot be a weight element.
We may assume that none of these arithmetic conditions holds for $w$. 
Then $(d,2a)=(e,2b)=(f,2c)=1$.
Hence there are integers $r,s,t$ which are coprime to $a,b,c$,
respectively, and such that
$\frac{r}a+\frac{s}b+\frac{t}c<1$ and $(\frac{rd}{2a},\frac{se}{2b},\frac{tf}{2c})$ is good,
by Lemma 2.

We shall show that $w$ is not a weight element by proving the existence of a non-trivial representation $\rho:G\to{SU(2)}$ with $\rho(w)\in\{\pm{I}\}$.
Let $X,Y$ and $Z\in{SU(2)}$ be (variable) matrices with trace $2\cos(\frac{r\pi}a)$,
$2\cos(\frac{s\pi}b)$ and $2\cos(\frac{t\pi}c)$, respectively.
Thus each of $X,Y$ and $Z$ varies in a space homeomorphic to $S^2$.
Moreover, it follows from the condition $\frac{r}a+\frac{s}b+\frac{t}c<1$ that,
for any choice of $X,Y,Z$, the trace of $XYZ$ is strictly greater than $-2$, 
and hence $XYZ\not=-I$.
Every matrix $M\in{SU(2)}\setminus\{-I\}$ has an unique square root $U$ with positive trace.
Moreover, $U$ varies continuously with $M$.
Thus, taking $M=XYZ$, we obtain an uniquely defined $U$, continuous in $X,Y,Z$, such that $U^2=XYZ$ and $I=\pm{X^a}=\pm{Y^b}=\pm{Z^c}$.
Thus we have a family of representations $\rho_{\ell,m,n}:G\to{SO(3)}=SU(2)/\{\pm{I}\}$,
which is continuously parametrized by $(\ell,m,n)\in{S^2}\times{S^2}\times{S^2}$.

If $X,Y,Z$ are chosen to be mutually commuting matrices (in other words, if $\ell=\pm{m}=\pm{n}$
in $S^2$) then the trace of $\rho_{\ell,m,n}(w)$ is $2\cos\theta$,
where 
\[
\theta=\pm\frac{rd}{2a}\pm\frac{se}{2b}\pm\frac{tf}{2c}.
\]
If $w$ is a weight element for $G$ then $\rho_{\ell,m,n}(w)\not=\pm{I}$ for any triple $(\ell,m,n)$.
In particular, $\rho_{\ell,\pm\ell,\pm\ell}(w)$ define $S^1$-equivariant maps from $S^2$ to 
$SU(2)\setminus\{\pm{I}\}$.
Since the triple $(\frac{rd}{2a},\frac{se}{2b},\frac{tf}{2c})$ is good,
it follows from Lemma 1 and Corollary 2.2 of \cite{Ho} that these maps belong to distinct homotopy classes. More specifically, fix $\ell=i$, corresponding to the diagonal matrix
\[
D(i)=\left(\begin{smallmatrix} i&0\\0&-i\end{smallmatrix}\right)\in{SU(2)}.
\]
Then we have 
\[\rho (\ell, \ell, \ell)=D(exp(i\pi\alpha)),
\quad\quad
\rho (\ell, \ell, -\ell)=D(exp(i\pi\beta)),
\]
\[
\rho (\ell, -\ell, \ell)=D(exp(i\pi\gamma))\quad\mathrm{
 and}\quad\rho (\ell, -\ell, -\ell)=D(exp(i\pi\delta)),
 \]
 where
 $\alpha=\frac{rd}{2a}+\frac{se}{2b}+\frac{tf}{2c}$,
 $\beta=\frac{rd}{2a}+\frac{se}{2b}-\frac{tf}{2c}$,
 $\gamma=\frac{rd}{2a}-\frac{se}{2b}+\frac{tf}{2c}$
 and $\delta=\frac{rd}{2a}-\frac{se}{2b}-\frac{tf}{2c}$.
The homotopy class of $\rho (\ell, \ell, -\ell)$ (resp., $\rho (\ell, -\ell, -\ell)$)
in $\pi_2(SU(2)\setminus\{\pm{I}\})$ is determined by the signs of the imaginary parts of
$\alpha,\beta$ (resp., $\gamma,\delta$), by Corollary 2.2 of \cite{Ho}.
Since $(\frac{rd}{2a},\frac{se}{2b},\frac{tf}{2c})$ is good, 
Lemma 1 and this Corollary together imply that the maps
$\rho (\ell, \ell, -\ell)$ and $\rho (\ell, -\ell, -\ell)$ belong to different homotopy classes.

Since the parameters $(\ell,\pm\ell)$ belong to a connected parameter space $S^2\times{S^2}$, this is a contradiction (as in \cite{Ho}).
Thus our assumption that the normal closure of $w$ is $G$ must be false, as required.
\end{proof}

Let $G$ be a group with a presentation $\mathcal{P}$ 
with $g$ generators and $r$ relators, 
and let $m(\mathcal{P})$ be the $r\times{g}$ matrix 
with $(i,j)$ entry the exponent sum of the $j$th generator in the $i$th relator.
Then $G$ has abelianization $G/G'\cong\mathbb{Z}$ 
if and only if $m(\mathcal{P})$ has rank $g-1$
(so all $g\times{g}$ minors of $m(\mathcal{P})$ are 0) 
and the highest common factor of the $(g-1)\times(g-1)$ minors 
is $1$.

\begin{theorem}
Let $B$ be an aspherical $2$-orbifold $B$ such that $\beta=\pi^{orb}(B)$ has weight $1$.
Then $B$ is either
 \begin{enumerate}
\item$S^2(a_1,\dots,a_m)$, 
with $m\geq3$, no three of the cone point orders $a_i$ 
have a nontrivial common factor,
and at most two disjoint pairs each have a common factor;
\item$P^2(b_1,\dots,b_m)$ 
with $m=2$ or $3$,
the cone point orders $b_i$ being pairwise relatively prime, and $b_1=2$ if $m=3$;
or 
\item $P^2(3,4,5)$; or
\item$\mathbb{D}(c_1\dots,c_p,\overline{d_1},\dots,\overline{d_q})$,
with $p\leq2$ and $2p+q\geq3$,
the cone point orders $c_i$ being all odd and relatively prime,
and at most one of the $d_j$ being even.
\end{enumerate} 
\end{theorem}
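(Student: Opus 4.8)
\emph{Strategy.} The plan is to combine two obstructions: a cheap one from abelianisation, and the representation‑theoretic one of Lemma 3 transported onto convenient quotients of $\beta$. Since a group of weight $1$ has cyclic abelianisation, $H_1(\beta)$ must be cyclic; and since weight cannot increase under a surjection, no quotient of $\beta$ may have weight $\ge 2$.

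\emph{Abelianisation.} Running through the classification of aspherical $2$-orbifolds — those with $\chi^{orb}\le 0$, all of which are good — I would write down the standard presentation of $\beta=\pi^{orb}(B)$ from the underlying surface $|B|$, the cone point orders, and (when $|B|$ has boundary, necessarily a union of mirror circles) the corner reflector orders, abelianise it, and apply the minor criterion recalled just above. Cyclicity of $H_1$ forces $|B|$ to be $S^2$, $P^2$, or the disc with mirror boundary: a closed orientable surface of genus $g\ge1$ contributes a free summand of rank $2g$; a non-orientable surface of genus $\ge2$ (in particular the Klein bottle), or a positive genus surface with boundary, contributes a summand of positive free rank together with $2$-torsion; and each mirror circle beyond the first contributes further free and $2$-torsion. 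For the three surviving surfaces the same computation — the Smith normal form of a diagonal matrix with one border row — turns cyclicity of $H_1$ into arithmetic conditions: for $S^2(a_1,\dots,a_m)$, that no three of the $a_i$ share a factor; for $P^2(b_1,\dots,b_m)$, that the $b_i$ are pairwise coprime; and for $\mathbb{D}(c_1,\dots,c_p,\overline{d_1},\dots,\overline{d_q})$, that the $c_i$ are pairwise coprime and odd and that at most one $d_j$ is even (the reflections contribute a single $\mathbb{Z}/2$, which an even cone order, or a second run of reflections between two even corners, would duplicate). The bounds $m\ge3$, $m\ge2$, $2p+q\ge3$ are forced by $\chi^{orb}\le0$.

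\emph{Representations.} The conditions that remain — ``at most two disjoint pairs each with a common factor'' for $S^2$, ``some $b_i=2$, or $B=P^2(3,4,5)$'' for $P^2$, and ``$m\le3$, $p\le2$'' — are not consequences of cyclicity of $H_1$, and I would get them by exhibiting weight‑$\ge2$ quotients of $\beta$. If $S^2(a_1,\dots,a_m)$ has three pairwise disjoint pairs with common factors, pick (using ``no three share a factor'', which forces the available‑prime sets of the three pairs to be disjoint) prime divisors $p,q,r$, one from each pair; after reindexing the cone points so the three pairs are consecutive, collapsing the other cone points, reducing the six chosen orders to $p,p,q,q,r,r$, and identifying the members of each pair, one surjects $\beta$ onto $\mathbb{Z}/p\ast\mathbb{Z}/q\ast\mathbb{Z}/r$. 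This free product has weight $\ge2$ by the method of Lemma 3: representations to $SU(2)$ with prescribed traces form a family parametrised by $S^2\times S^2\times S^2$ with \emph{no} constraint on the product of the three matrices, a purported weight element $w$ would yield $S^1$-equivariant maps $S^2\to SU(2)\setminus\{\pm I\}$ in at least two homotopy classes over the connected space $S^2\times S^2$ (Lemma 1 and Corollary 2.2 of \cite{Ho}), and since $p,q,r$ are distinct primes with at most one equal to $2$ one can choose the traces so that the resulting triple of $\psi$-values satisfies $2\max<\text{sum}$ and no $\pm\pm\pm$ combination is an integer — the analogue of Lemma 2 here has no exceptional triple. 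Next, $\pi^{orb}(P^2(b_1,b_2,b_3))$ is the group of Lemma 3 (set $u=c^{-1}$ in the orbifold presentation), which has weight $\ge2$ when $b_1,b_2,b_3$ are distinct quasi-primes other than $3,4,5$; for general pairwise-coprime $b_i$ one surjects onto $\pi^{orb}(P^2(p_1,p_2,p_3))$ by reducing each $b_i$ to a chosen divisor $p_i$, and the $p_i$ can be taken to be distinct quasi-primes avoiding $\{3,4,5\}$ \emph{unless} one $b_i$ is a power of $2$ that is $\ge4$ and the other two are a power of $3$ and a power of $5$. In that residual family Lemma 3 still applies: its quasi-prime hypothesis enters, via Lemma 2, only to deduce $\gcd(d,2a)=1$ from ``$a\nmid d$ and $d$ odd'', and this survives for prime-power orders once one also discards words whose data is divisible by the ambient prime — so the only escapee is $P^2(3,4,5)$ itself. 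Finally $m\le3$ for $P^2$ and $p\le2$ for $\mathbb{D}$ follow by deleting cone points (and collapsing corner reflectors) to reach one of the weight‑$\ge2$ situations above: from a $4$-cone $P^2$ one deletes the order‑$2$ or order‑$4$ cone point to reach a weight‑$\ge2$ $3$-cone $P^2$; from $\mathbb{D}$ with $p\ge3$ one deletes and collapses down to a quotient that maps onto a free product of three cyclics of distinct prime order.

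\emph{Main obstacle.} The delicate part is the bookkeeping around the $\{3,4,5\}$ exception of Lemmas 2 and 3: one must verify that $P^2(3,4,5)$ really is the \emph{only} non-$S^2$, non-disc base that escapes, which dictates the precise case split (all orders odd; exactly one even; the $2$-power/$3$-power/$5$-power residue) and requires reopening the proofs of Lemmas 2 and 3 to see exactly how far the quasi-prime hypothesis relaxes, as well as proving the free-product analogue cleanly. A secondary but error-prone task is getting the orbifold enumeration exactly right — correctly excluding the spherical and bad orbifolds (teardrops, spindles, the small triangle and polygon orbifolds) so that the candidate list is complete — together with the Smith-normal-form computations that pin down the arithmetic conditions in (1) and (4).
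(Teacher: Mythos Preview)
Your approach is the paper's: cyclic abelianisation pins down the underlying surface and the arithmetic of the cone and corner orders, and then weight-$\geq 2$ quotients supply the remaining bounds. Two small differences are worth noting. In cases (1) and (4) the paper does not redo the $SU(2)$ representation argument; it simply exhibits a free product of cyclic groups as a quotient (in case (1), kill the products $v_{2i-1}v_{2i}$ for the paired cone points together with all remaining $v_j$, obtaining $\ast_{i}\mathbb{Z}/\alpha_i\mathbb{Z}$ with $\alpha_i=(a_{2i-1},a_{2i})$; in case (4), kill all the reflection generators $x_j$, obtaining $\ast_{i}\mathbb{Z}/c_i\mathbb{Z}$) and then cites \cite{Ho} for the Scott--Wiegold theorem that a free product of three or more nontrivial cyclic groups already has weight $\geq 2$. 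This is shorter than your plan of reproving that fact through the Lemma~1--3 machinery, and it sidesteps your worry about an exceptional triple in the free-product analogue of Lemma~2. Conversely, in case (2) your caution about the prime-power residue $(2^a,3^b,5^c)$ is more scrupulous than the paper, whose proof asserts in one line that a Lemma~3 quotient exists whenever $m=3$, $b_1>2$ and $(b_1,b_2,b_3)\neq(3,4,5)$, leaving implicit exactly the bookkeeping you flag as the main obstacle.
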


\begin{proof}
Since $\beta$ has cyclic abelianization,
the surface underlying $B$ is $S^2$, $P^2$ or $D^2$.

The groups $\beta$ for such orbifolds have presentations
\[
\langle{v_1,\dots,v_m}\mid{v_i^{a_i}=1}~\forall{i}\leq{m},
~\Pi{v_i}=1
\rangle,
\]
\[
\langle{u,v_1,\dots,v_m}\mid{u^2=v_1\cdots {v_m,}~v_i^{b_i}=1~\forall{i}\leq{m}}\rangle,
\quad\mathrm{and}
\]
\[
\langle{v_1,\dots,v_p,x_1,\dots,x_{q+1}}\mid 
v_i^{c_i}=1~\forall~i\leq{p},~x_j^2=(x_jx_{j+1})^{d_j}=1~\forall~j\leq{q},
\]
\[x_{q+1}\Pi{v_i}=(\Pi{v_i})x_1
\rangle,
\]
respectively.
Here the $v_i$ are orientation preserving, 
while $u$ and the $x_j$ are orientating reversing.

Most of the details on the parity and common factors of the cone point orders 
follow from the fact that $\beta/\beta'$ is cyclic
and the above observations on determinants,
while the lower bounds on the numbers of cone points 
and corner points hold because $B$ is aspherical.

In case (1), if $p$ disjoint pairs each have proper common factors
then we may assume that $\alpha_1=(a_1,a_2),\dots,\alpha_p=(a_{2p-1},a_{2p})$ 
are all greater than 1.
Let $\nu$ be the normal subgroup generated by $\{v_1v_2,\dots,v_{2p-1}v_{2p}\}$
and $\{v_i:p<i\leq{m}\}$.
Then $\beta/\nu\cong\ast_{i=1}^p{Z/\alpha_iZ}$.
Since it has weight 1, $p\leq2$ \cite{Ho}.

In case (2), if $m>3$ or if $m=3$ and $b_1>2$ but $(b_1,b_2,b_3)\not=(3,4,5)$
then $\beta$ has a quotient $G$ as in Lemma 3, and so cannot have weight 1.
Thus (2) or (3) must hold.

In case (4), the free product $\ast_{i=1}^p{Z/c_iZ}$ 
is a quotient of $\beta$.
Since it has weight one, $p\leq2$ \cite{Ho}.
\end{proof}

The two natural extensions of the Scott-Wiegold conjecture
which are raised in the final section of \cite{Ho} 
each suggest strong bounds on $m$ in case (1).
Firstly, does every free product of three nontrivial groups have weight $>1$?
Secondly, does every free product of $2k+1$ finite cyclic groups have weight $>k$?

If $m\geq9$ and $\nu$ is the normal subgroup generated by $\{v_1v_2v_3,v_4v_5v_6\}$
then $\beta/\nu$ is the free product of three nontrivial groups.
Thus if such free products always have weight $>1$ we must have $m\leq8$.

If every free product of 5 or more finite cyclic groups has weight $>1$
then $m\leq4$ in case (1).
When $m=3$ every such group has weight 1,
for we may assume that $a_1$ is odd,
and then $v_1^{-1}v_2$ is a normal generator.
If $m=4$ and $(a_1,a_2)=(a_3,a_4)=1$ then $v_1v_2$ is a normal generator.
However, if $m=4$ and the exponents  do not form two relatively prime pairs 
then the group has a quotient with presentation 
\[
\langle{x,y,z}\mid {x^a=y^{bc}=z^{bd}=(xyz)^{cd}=1}\rangle,
\]
where  $a,b,c$ and $d$ are distinct primes.
Do such groups have weight 1?

In case (2), if $m=2$ then $v_1^{-1}u$ is a normal generator.
If there are any examples with base $P(c_1,\dots,c_m)$ and $m=3$ 
then
\[
\langle{u,x,y,z}\mid {u^2=xyz,~x^2=y^b=z^c=1}\rangle
\]
has weight 1 for some distinct odd primes $b<c$.
(This applies also to case (3).)
This seems unlikely.

If $B=\mathbb{D}(\overline{d_1,\dots,d_q})$
or $B=\mathbb{D}(c,\overline{d_1,\dots,d_q})$,
where the $d_i$ with $i\geq2$ are all odd,
then $\beta$ has weight 1 for any $q>0$.
If there are any examples with $p=2$ then
\[
\langle{v,w,x}\mid{v^a=w^b=x^2=1,~xvw=vwx}\rangle
\]
has weight 1 for some distinct odd primes $a<b$.
Again, this seems unlikely.
In summary, we expect only $m=3$ or 4 in case (1), 
$m=2$ in case (2) and $p\leq1$ in case (4).

We shall give more details on the low dimensional cases $n=1,2$ or 3 in subsequent sections.

\section{the classical case}

Let $M(0;S)$ be the Seifert fibred 3-manifold with base orbifold
$B=S^2(\alpha_1,\dots\alpha_r)$ and Seifert data $S=\{(\alpha_1,\beta_1),\dots,(\alpha_r,\beta_r)\}$.
(Here $0$ is the genus of the surface underlying the base orbifold,
and the generalized Euler invariant is $\varepsilon=-\Sigma\frac{\beta_i}{\alpha_i}$.)
Then the knot manifold of the $(p,q)$-torus knot $k_{p,q}$ 
is $M(0;S)$, where $S=\{(p,q),(q,p),(pq,-p^2-q^2)\}$.
(See Lemma 7 below.)
This knot is fibred, and it has Alexander polynomial 
$\Delta_1(k_{p,q})=\frac{(t^{pq}-1)(t-1)}{(t^p-1)(t^q-1)}$,
which is a square-free product of cyclotomic polynomials.
We shall extend these properties to other knots whose associated knot manifolds 
are Seifert fibred. 

\begin{theorem}
If $K$ is a nontrivial knot such that $M(K)$ is Seifert fibred,
with Seifert fibration $p:M\to{B}$, then 
\begin{enumerate}

\item $B=S^2(a_1,\dots,a_m)$, with $m\geq3$, and $\varepsilon(p)=0$; 

\item {no} cone point order $a_i$ is relatively prime to all the others,
at most two disjoint pairs each have a common factor,
and no three have a nontrivial common factor;

\item$\Sigma\frac1{a_i}\leq{m-2}$;

\item $K$ is fibred; and

\item $\Delta_1(K)/\Delta_2(K)$ is a square-free product of cyclotomic polynomials.
\end{enumerate}
\end{theorem}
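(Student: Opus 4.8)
The plan is to combine the weight-$1$ restriction of Theorem 4 with two special homological features of a $0$-surgery manifold: $M(K)$ is aspherical, and $H_1(M(K);\mathbb{Z})\cong\mathbb{Z}$. I would first reduce to Theorem 4. Since $\pi_1(M(K))$ is a quotient of the knot group it has weight $1$; since $K$ is nontrivial $M(K)$ is irreducible \cite{Ga}, and it has infinite fundamental group (which surjects onto $H_1(M(K))=\mathbb{Z}$), so $M(K)$ is aspherical and its base orbifold $B$ is an aspherical $2$-orbifold. Then $\chi^{orb}(B)\le0$, and $\beta=\pi^{orb}(B)$, being an infinite quotient of the weight-$1$ group $\pi_1(M(K))$, has weight $1$, so Theorem 4 places $B$ in one of its four families. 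Once $B$ has been shown to be $S^2(a_1,\dots,a_m)$, the inequality $\chi^{orb}(B)\le0$ is exactly statement (3), while the clauses of (2) asserting that no three cone point orders have a common factor and that at most two disjoint pairs have a common factor are precisely the content of case (1) of Theorem 4.

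Next I would eliminate the families with non-orientable or mirrored base. If the underlying surface of $B$ is non-orientable, or $B$ has reflector boundary, then since $M(K)$ is orientable the fibration reverses the fibre orientation along some orientation-reversing loop of $B$; thus some element of $\pi_1(M(K))$ conjugates the regular fibre $h$ to $h^{-1}$, whence $2[h]=0$ in $H_1(M(K))\cong\mathbb{Z}$ and so $[h]=0$. Then $\langle h\rangle\le\pi_1(M(K))'$, so $H_1(M(K))\cong\beta/\beta'$. But the presentations exhibited in the proof of Theorem 4 show that $\beta/\beta'$ is finite for each of $P^2(b_1,\dots,b_m)$, $P^2(3,4,5)$ and $\mathbb{D}(c_1,\dots,c_p,\overline{d_1},\dots,\overline{d_q})$: the relators $x_j^2=1$ in the last case, and the relator $u^2=v_1\cdots v_m$ together with the relators $v_i^{b_i}=1$ in the others, force every generator to be torsion in the abelianisation. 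This contradicts $H_1(M(K))\cong\mathbb{Z}$, so $B=S^2(a_1,\dots,a_m)$ with $m\ge3$.

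Now I would determine $\varepsilon(p)$, obtain the remaining clause of (2), and prove (4). The presentation matrix of $H_1(M(K))$ read off from the Seifert data has determinant $\pm\bigl(\prod_i a_i\bigr)\varepsilon(p)$, so $b_1(M(K))=1$ forces $\varepsilon(p)=0$, completing (1). Writing $\varepsilon(p)=-\sum_i\beta_i/a_i=0$ with each $(a_i,\beta_i)=1$: if some $a_i$ were coprime to every other $a_j$, then $\beta_i/a_i=-\sum_{j\ne i}\beta_j/a_j$ would exhibit $\beta_i/a_i$ as a fraction whose reduced denominator is coprime to $a_i$, forcing $a_i\mid\beta_i$ and hence $a_i=1$, which is impossible; this gives the last part of (2). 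For (4): since $\varepsilon(p)=0$ and the underlying surface is $S^2$, the class $[h]$ has infinite order in $H_1(M(K))\cong\mathbb{Z}$, say $[h]=n\ne0$. Then $\pi_1(M(K))'=\ker(\pi_1(M(K))\to\mathbb{Z})$ meets $\langle h\rangle$ trivially, so it maps isomorphically onto a subgroup of index $n$ in $\beta$; being a torsion-free subgroup of finite index in the aspherical $2$-orbifold group $\beta$, this subgroup is the fundamental group of a closed orientable surface $F$. By Stallings' fibration theorem $M(K)$ fibres over $S^1$ with fibre $F$, and this fibration represents a generator of $H^1(M(K);\mathbb{Z})$; since a knot is fibred exactly when its $0$-surgery fibres over $S^1$ \cite{Ga}, $K$ is fibred, with fibre the Seifert surface $\widehat{F}$ obtained from $F$ by deleting an open disc.

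Finally, for (5): since $K$ is fibred, its Alexander module is $H_1(\widehat{F};\mathbb{Z})$, and the inclusion $\widehat{F}\hookrightarrow F$ induces an isomorphism $H_1(\widehat{F};\mathbb{Z})\cong H_1(F;\mathbb{Z})\cong\mathbb{Z}^{2g}$, with the generator $t$ of $\pi_1(M(K))/\pi_1(F)$ acting as the monodromy matrix $A$. Because $h$ is central in $\pi_1(M(K))$ and $[h]=n$, conjugation by $t^n$ is inner on $\pi_1(F)$, hence trivial on $H_1(F;\mathbb{Z})$, so $A^n=I$; therefore the minimal polynomial of $A$ divides $t^n-1$, which is a square-free product of cyclotomic polynomials. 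As $\Delta_1(K)=\det(tI-A)$ is the characteristic polynomial of $A$ and $\Delta_1(K)/\Delta_2(K)$ is its minimal polynomial, (5) follows. I expect the main obstacle to be this last circle of ideas — identifying $\pi_1(M(K))'$ as a surface group, equivalently recognising $M(K)$ as the mapping torus of a \emph{periodic} surface homeomorphism, and checking that the fibration so produced lies in the homology class that detects fibredness of $K$; by contrast, eliminating the non-orientable bases via the abelianisation computation, and the arithmetic excluding a cone point order coprime to all the others, should be comparatively routine.
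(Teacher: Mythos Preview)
Your argument is correct and follows essentially the same route as the paper's: Gabai for asphericity, Theorem~4 for the admissible bases, the observation that $\beta/\beta'$ is finite to force $[h]$ to have infinite order (you run this as a contrapositive, the paper runs it directly), $\varepsilon(p)=0$ from $b_1(M)=1$, and finite-order monodromy to control $\Delta_1/\Delta_2$. The only noteworthy difference is that you make the fibre explicit by identifying $\pi'$ with a finite-index torsion-free (hence surface) subgroup of $\beta$ and then deduce $A^n=I$ on $H_1(F)$, whereas the paper argues more abstractly that $\langle\pi',h\rangle\cong\pi'\times\mathbb{Z}$ has finite index, so $\pi'$ is finitely presentable and the monodromy has finite order in $Out(\pi')$; both lead to the same conclusion, and your version has the virtue of pinning down the fibre surface concretely.
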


\begin{proof}
Since $K$ is nontrivial, $M=M(K)$ is aspherical \cite{Ga}.
Let $p:M\to{B}$ be the projection of the Seifert fibration, 
let $h$ be the image of the regular fibre in $\pi=\pi_1(M)$,
and let  $\beta=\pi^{orb}(B)$.
Since $\beta/\beta'$ is cyclic it is finite, and so the image of $h$ in
$\pi/\pi'=H_1(M)$ has infinite order.
Therefore $\beta$ acts trivially on $h$,
and so $B$ is orientable, since $M$ is orientable.
Hence $B=S^2(a_1,\dots,a_m)$, with $m\geq3$,
by Theorem 4.
This theorem also implies that no three of the  $a_i$ have a nontrivial common factor,
and at most two disjoint pairs each have a common factor.

The subgroup $\langle\pi',h\rangle\cong\pi'\times\mathbb{Z}$ has finite index
in $\pi$, and so $\pi'$ is finitely presentable.
Therefore $M$ fibres over $S^1$, and the monodromy has finite order in $Out(\pi')$.
Moreover, $\varepsilon(p)$ is 0,
and so no $a_i$ is relatively prime to all the others.

If $K=k_{2,3}$ is the trefoil knot then $\Sigma\frac1{a_i}=1$.
In all other cases $B$ must be a hyperbolic 2-orbifold, 
and so $\chi^{orb}(B)=2-m+\Sigma\frac1{a_i}<0$.

Let $\lambda$ be a longitude for $K$.
Then $\pi\cong\pi{K}/\langle\langle\lambda\rangle\rangle$,
and so  $\pi/\pi''\cong\pi{K}/\pi{K}''$, since $\lambda\in\pi{K}''$.
A choice of meridian for $K$ determines an isomorphism
$\mathbb{Z}[\pi/\pi']\cong\Lambda=\mathbb{Z}[t,t^{-1}]$,
and the annihilator of $\pi'/\pi''$ as a $\Lambda$-module
is generated by $\Delta_1(K)/\Delta_2(K)$ \cite{Cr}.
Since $M$ is fibred, so is $K$ \cite{Ga},
and since the monodromy of the fibration of $M$ has finite order,
$\Delta_1(K)/\Delta_2(K)$ is a square-free product of cyclotomic factors.
\end{proof}

The various arithmetic conditions listed above are not in themselves sufficient
to impose any bound on $m$.
The condition $\Sigma\frac1{a_i}\leq{m-2}$ is only of interest when $m=3$.

If $K=k_{2,3}$ is the trefoil knot then $M(K)$ 
is the flat 3-manifold $G_5$ with holonomy $Z/6Z$,
which is Seifert fibred over $S^2(2,3,6)$.

\begin{cor}
If $K$ is not the trefoil knot then $M(K)$
is a $\mathbb{H}^2\times\mathbb{E}^1$-manifold.
\end{cor}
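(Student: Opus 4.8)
The plan is to read the geometry of $M=M(K)$ directly off Theorem 6 together with the standard table of Seifert geometries, and then to treat the flat case separately. Since $K$ is nontrivial and $M$ is Seifert fibred, $M$ is aspherical \cite{Ga}, and by Theorem 6 it is Seifert fibred over $B=S^2(a_1,\dots,a_m)$ with $m\geq3$, Euler invariant $\varepsilon(p)=0$, and with no three of the $a_i$ having a nontrivial common factor. A closed Seifert fibred $3$-manifold with vanishing Euler invariant carries the geometry $S^2\times\mathbb{E}^1$, $\mathbb{E}^3$ or $\mathbb{H}^2\times\mathbb{E}^1$, according as $\chi^{orb}(B)$ is positive, zero or negative. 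The first possibility is ruled out because $M$ is aspherical (an $S^2\times\mathbb{E}^1$-manifold is finitely covered by $S^2\times\mathbb{R}$). So $M$ is flat or an $\mathbb{H}^2\times\mathbb{E}^1$-manifold, and it remains to show that in the flat case $K$ must be the trefoil. (This also supplies the justification for the claim, made in passing in the proof of Theorem 6, that $B$ is hyperbolic whenever $K$ is not the trefoil.)

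So suppose $\chi^{orb}(B)=0$, that is $\sum 1/a_i=m-2$. Since each $a_i\geq2$ we have $\sum 1/a_i\leq m/2$, so $m\leq4$, and $m=4$ only for $B=S^2(2,2,2,2)$; but there three of the cone point orders share the factor $2$, contradicting Theorem 6. Hence $m=3$ and $(a_1,a_2,a_3)$ is one of the Euclidean triples $(2,3,6)$, $(2,4,4)$, $(3,3,3)$; of these only $(2,3,6)$ has trivial common factor, so Theorem 6 gives $B=S^2(2,3,6)$. Now $M$ is an orientable flat $3$-manifold with $H_1(M;\mathbb{Z})\cong H_1(M(K);\mathbb{Z})\cong\mathbb{Z}$, and the only orientable flat $3$-manifold with torsion-free first homology of rank $1$ is $G_5$. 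Thus $M\cong G_5=M(k_{2,3})$.

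It remains to promote $M(K)\cong M(k_{2,3})$ to $K=k_{2,3}$, since a classical knot need not be determined by its knot manifold. By Theorem 6, $K$ is fibred; and since $\pi_1(G_5)$ is polycyclic of Hirsch length $3$ with infinite cyclic abelianisation, its commutator subgroup has Hirsch length $2$, so the Alexander module $\pi'/\pi''\cong\pi K'/\pi K''$ has $\mathbb{Z}$-rank at most $2$ and $\deg\Delta_1(K)\leq2$. A fibred knot with $\deg\Delta_1(K)\leq2$ has genus $1$ (as $K$ is nontrivial), and the genus-one fibred knots in $S^3$ are the trefoil and the figure eight. The figure eight has Anosov monodromy $\left(\begin{smallmatrix} 2&1\\1&1\end{smallmatrix}\right)$, so $\chi(4_1,0)$ is a $\mathbf{Sol}$-manifold, not flat. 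Hence $K=k_{2,3}$, as required. The main obstacle is precisely this last paragraph: excluding the flat geometry cannot be carried out purely at the level of $M(K)$, but requires distinguishing the trefoil from the figure eight as knots.
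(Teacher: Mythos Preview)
Your proof is correct, but the route differs from the paper's.  (A numbering remark first: what you cite as ``Theorem 6'' is Theorem 5 in the paper; the Corollary itself carries the number 6 because \texttt{cor} shares the theorem counter.)

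The paper argues from the $S^1$-fibration rather than from the Seifert base.  From Theorem 5, $K$ is fibred with finite-order monodromy.  If $K$ is not the trefoil then its genus is at least $2$, since the only other fibred knot of genus $1$ is the figure-eight, and the figure-eight is already excluded by the hypotheses of Theorem 5 (its Alexander polynomial $t^2-3t+1$ is not a product of cyclotomic polynomials, so condition (5) fails).  A genus $\geq2$ surface bundle with finite-order monodromy --- equivalently here, $\varepsilon(p)=0$ --- is an $\mathbb{H}^2\times\mathbb{E}^1$-manifold.  The paper therefore never classifies the flat base orbifolds or identifies $M$ with $G_5$.

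Your argument instead works through the base: you use $\varepsilon(p)=0$ and asphericity to reduce to the flat/hyperbolic dichotomy, then pin down the flat case as $B=S^2(2,3,6)$ via the arithmetic constraints of Theorem 5, recognise $M$ as $G_5$, bound the genus via Hirsch length, and finally eliminate the figure-eight by its $\mathbf{Sol}$ geometry.  This is longer but more self-contained, and it makes explicit the ``$B$ is hyperbolic unless $K$ is the trefoil'' claim that the proof of Theorem 5 asserted in passing.  The paper's shortcut is simply that condition (5) kills the figure-eight before any geometric analysis is needed; you trade that algebraic exclusion for a geometric one.
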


\begin{proof}
If $K$ satisfies the above conditions but is not the trefoil knot
then it has genus $\geq2$, 
since the figure-eight knot is the only other fibred knot
of genus 1.
Hence $M$ is an $\mathbb{H}^2\times\mathbb{E}^1$-manifold,
since  $\varepsilon(p)=0$.
\end{proof}

Examples with $m=3$ or 4 are easy to find.

\begin{lemma}
If  $p>q>(p,q)=1$ then $M(k_{p,q})$ and $M(k_{p,q}\#-k_{p,q})$ are Seifert fibred, 
with three and four exceptional fibres, respectively.
\end{lemma}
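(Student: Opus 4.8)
The plan is to realize both manifolds explicitly as Seifert fibred spaces by exhibiting Seifert data and computing the generalized Euler invariant. The starting point is the classical fact (quoted in the paragraph preceding Theorem 5) that the knot manifold $M(k_{p,q})$ of the $(p,q)$-torus knot equals $M(0;S)$ with $S=\{(p,q),(q,p),(pq,-p^2-q^2)\}$, so that it is Seifert fibred over $S^2(p,q,pq)$ with three exceptional fibres and $\varepsilon = -\frac{q}{p}-\frac{p}{q}+\frac{p^2+q^2}{pq}=0$. Thus the first assertion is essentially just a matter of recording this description and checking that the three cone point orders $p$, $q$, $pq$ are genuinely $>1$ (which holds since $p>q>1$) and pairwise give a consistent picture: $\gcd(p,pq)=p>1$, $\gcd(q,pq)=q>1$, $\gcd(p,q)=1$, so we have exactly the two disjoint pairs with common factor allowed by Theorems 4 and 5.

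For the connected sum $k_{p,q}\#-k_{p,q}$, the first step is to recall that $M(K_1\#K_2)$ is obtained from $M(K_1)$ and $M(K_2)$ by a ``splice'' or fibre-sum along the fibres over regular points — equivalently, $M(K_1\#K_2)$ is the result of gluing $M(K_1)\setminus(\text{fibred solid torus})$ to $M(K_2)\setminus(\text{fibred solid torus})$ along their torus boundaries so that fibres match. (One way to see this: the $0$-surgery manifold of a connected sum is the fibre-connected-sum of the two $0$-surgery manifolds along copies of $S^1\times D^2$ neighbourhoods of the meridians, and for fibred knots these meridian curves can be taken transverse to the Seifert fibration.) Since both pieces carry $S^1$-fibrations with matching fibre on the boundary torus, the glued manifold inherits a Seifert fibration, and its base orbifold is obtained by gluing $S^2(p,q,pq)$ minus an open disc (around a regular point) to another copy of the same along their boundary circles, yielding $S^2(p,q,pq,pq)$ — four exceptional fibres. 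The Euler number is additive under this operation (with the appropriate sign convention for $-k_{p,q}$), giving $\varepsilon=0+0=0$, consistent with Theorem 5(1). Concretely one can write the Seifert data as $S=\{(p,q),(q,p),(pq,-p^2-q^2),(pq,p^2+q^2)\}$ or a suitable normalization thereof.

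The main obstacle is verifying that $M(k_{p,q}\#-k_{p,q})$ really does decompose as claimed and that the two $S^1$-fibrations glue to an honest global Seifert fibration — i.e., justifying the fibre-sum description of the $0$-surgery manifold of a connected sum and checking the matching of fibres on the splicing torus is compatible with orientations (this is where the mirror $-k_{p,q}$ matters, reversing one of the Seifert invariants so the Euler contributions cancel rather than add). Once that is in hand, the count of exceptional fibres and the computation $\varepsilon=0$ are immediate, and the arithmetic side conditions of Theorem 5 are checked exactly as for the torus knot: no cone point order ($p$, $q$, $pq$, $pq$) is coprime to all others, the pairs with common factor are $\{(p,pq)\}$ and $\{(q,pq)\}$ (two disjoint pairs, using the two different copies of $pq$), and no three share a common factor since $\gcd(p,q)=1$.
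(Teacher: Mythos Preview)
Your treatment of $M(k_{p,q})$ is fine and agrees with the paper.

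The second part, however, contains a genuine error. The paper's argument is that $M(K\#-L)=X(K)\cup(-X(L))$, the two knot exteriors glued along their boundary tori by matching the canonical meridian--longitude framings. Since $X(k_{p,q})$ is Seifert fibred over $D^2(p,q)$ with two exceptional fibres, and the regular fibres on the two boundary tori have the same slope precisely when $K=L$, gluing two copies yields a Seifert fibration over $S^2(p,p,q,q)$, not over $S^2(p,q,pq,pq)$.

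Your description goes wrong in two places. First, you assert that $M(K_1\# K_2)$ is a fibre sum of $M(K_1)$ and $M(K_2)$ along neighbourhoods of \emph{regular} fibres. What is actually removed from each $M(k_{p,q})$ is the surgery solid torus, whose core (the image of the meridian) is the exceptional fibre of multiplicity $pq$, not a regular fibre; removing it returns $X(k_{p,q})$, fibred over $D^2(p,q)$. Second, your own construction is internally inconsistent: deleting a regular disc from each copy of $S^2(p,q,pq)$ and gluing along the boundary circles would produce a sphere with \emph{six} cone points $p,q,pq,p,q,pq$, not the four you claim. (Your proposed Seifert data $\{(p,q),(q,p),(pq,-p^2-q^2),(pq,p^2+q^2)\}$ also fails to have $\varepsilon=0$: the last two terms cancel, leaving $\varepsilon=-q/p-p/q\neq0$.)

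So the ``main obstacle'' you flag---checking that the fibrations match on the splicing torus---is exactly the content of the paper's proof, but the pieces being glued are the exteriors $X(k_{p,q})$ (base $D^2(p,q)$), and the resulting base is $S^2(p,p,q,q)$.
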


\begin{proof}
The Seifert fibration of the exterior $X(k_{p,q})$ has two exceptional fibres, 
of multiplicities $p$ and $q$.
This fibration extends to $M(k_{p,q})=X(k_{p,q})\cup{D^2\times{S^1}}$,
with the core of the solid torus having multiplicity $pq$,
and so $M(k_{p,q})$ is Seifert fibred over $S^2(p,q,pq)$.

In general, $M(K\#-L)=X(K)\cup{-X(L)}$, where the boundaries are identified using 
the canonical meridian-longitude coordinizations. 
If $K$ and $L$ are torus knots then the Seifert fibrations on the boundaries 
agree if and only if $K=L$.
Hence $M(k_{p,q}\#-k_{p,q})$ 
is Seifert fibred over $S^2(p,p,q,q)$.
\end{proof} 

It is much harder to find other examples.
(If  Conjecture (2) of \cite{Ho} is resolved as expected then 
each Seifert fibred knot manifold must have at most 4 exceptional fibres.)
The first knot in the standard tables which satisfies 
the conditions of Theorem 5, but which is not a torus knot, 
is $10_{132}$.
(This is also known as the Montesinos knot 
$\mathfrak{m}(-1;(2,1),(3,1),(7,2))$,
in the notation of \cite{BZ}.)
Its knot manifold $M(10_{132})$ is Seifert fibred, 
with base $S^2(2,3,19)$.
This is the only hyperbolic arborescent knot whose 
associated knot manifold is a Seifert manifold with
three exceptional fibres \cite{Me,Wu}.
If $K$ is an alternating knot such that $M(K)$ is Seifert fibred 
must $K$ be a $(2,q)$-torus knot?

If $K$ is any other knot such that $M(K)$ is Seifert fibred with 
more than three exceptional fibres
then $K$ is either hyperbolic or is a satellite of a torus knot, 
with the knot exterior having a JSJ decomposition into the union of
a torus knot exterior and a hyperbolic piece \cite{MM}.

\section{aspherical seifert fibred $2$-knot manifolds}

A 2-knot is a locally flat embedding of $S^2$ in a homotopy 4-sphere.
The constructions that we shall use give PL 2-knots in PL homotopy 4-spheres.
On the other hand, all homotopy 4-spheres are homeomorphic to $S^4$.
The groups of 2-knots with aspherical, Seifert fibred knot manifolds
may be characterized as follows.

\begin{theorem}
A group $\pi$ is the group of a $2$-knot $K$ such that $M(K)$ is an
aspherical, Seifert fibred $4$-manifold if and only if $\pi$ is an orientable
$PD_4$-group of weight $1$ and with a normal subgroup $A\cong\mathbb{Z}^2$.
\end{theorem}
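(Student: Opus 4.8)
The statement is a biconditional, which I would prove in two parts. For the forward implication, suppose $K$ is a $2$-knot with $M=M(K)$ aspherical and Seifert fibred. Since $n=2>1$ we have $\pi=\pi_1(M)\cong\pi K$, so $\pi$ has weight $1$ because knot groups do. Elementary surgery on a locally flat $2$-sphere in a homotopy $4$-sphere produces a closed orientable $4$-manifold, and $M$ is aspherical by hypothesis, so $\pi$ is an orientable $PD_4$-group. The Seifert structure exhibits $M$ as an orbifold bundle with general fibre a torus over an aspherical $2$-orbifold $B$ (the fibre is $T^2$ and not the Klein bottle because, as noted earlier, the general fibre must be of orientable type), and the associated exact sequence $1\to\mathbb{Z}^2\to\pi\to\pi^{orb}(B)\to1$ supplies the required normal subgroup $A\cong\mathbb{Z}^2$.

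For the converse, let $\pi$ be an orientable $PD_4$-group of weight $1$ with a normal subgroup $A\cong\mathbb{Z}^2$. The plan is to verify that $\pi$ meets the conditions characterising $2$-knot groups, to construct a $2$-knot $K$ with $\pi K\cong\pi$, and then to identify $M(K)$. First, $\beta=\pi/A$ is a $2$-orbifold group (in particular virtually a closed surface group), since the quotient of a $PD$-group of type $FP$ by a normal $PD$-subgroup is again a $PD$-group; hence $\chi(\pi)=\chi(\mathbb{Z}^2)\chi(\beta)=0$ by multiplicativity of (rational) Euler characteristics. Weight $1$ makes $H_1(\pi;\mathbb{Z})$ cyclic, and comparing $\chi(\pi)=0$ with orientable Poincar\'e duality ($b_0=b_4=1$, $b_1=b_3$) gives $b_2=2b_1-2$, so $b_1\geq1$ and hence $H_1(\pi;\mathbb{Z})\cong\mathbb{Z}$ with $b_2=0$. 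A short duality and universal-coefficients computation, using $H^3(\pi;\mathbb{Z})\cong H_1(\pi;\mathbb{Z})\cong\mathbb{Z}$ together with $H^3(\pi;\mathbb{Z})\cong\mathrm{Hom}(H_3(\pi;\mathbb{Z}),\mathbb{Z})\oplus\mathrm{Ext}(H_2(\pi;\mathbb{Z}),\mathbb{Z})$ and $H_3(\pi;\mathbb{Z})\cong\mathbb{Z}$, then kills the torsion of $H_2(\pi;\mathbb{Z})$, which together with $b_2=0$ gives $H_2(\pi;\mathbb{Z})=0$. Since $PD_4$-groups are finitely presentable, $\pi$ now satisfies Kervaire's conditions, which are sufficient for a group to be a $2$-knot group (the realisation taking place in dimension four); fix such a $2$-knot $K$, so that $\pi_1(M(K))=\pi K\cong\pi$.

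It then remains to identify $M(K)$. It is a closed orientable $4$-manifold with $\chi(M(K))=0=\chi(\pi)$ and fundamental group the $PD_4$-group $\pi$, hence aspherical by the criterion that a closed $4$-manifold whose fundamental group is a $PD_4$-group and whose Euler characteristic equals that of the group must be aspherical. Being aspherical with $\pi_1$ a $PD_4$-group containing a normal $\mathbb{Z}^2$, $M(K)$ is then homeomorphic, by the classification and rigidity theory for Seifert fibred $4$-manifolds, to the Seifert fibred manifold built from the extension $1\to\mathbb{Z}^2\to\pi\to\beta\to1$, and so is itself Seifert fibred. This completes the argument.

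The main obstacle is not the homological bookkeeping, which is routine, but the two external ingredients the converse leans on: the realisation of a group satisfying Kervaire's conditions as a $2$-knot group, which is where $K$ is actually produced, and the four-dimensional structure and rigidity results for Seifert fibred manifolds. I expect the last step to be the most delicate to state cleanly, since promoting ``$M(K)$ is an aspherical $4$-manifold with the correct fundamental group'' to ``$M(K)$ is Seifert fibred'' requires uniqueness up to homeomorphism within this class of fundamental groups, not merely homotopy-theoretic information.
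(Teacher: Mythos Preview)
Your forward direction is fine, and your homological bookkeeping for the converse is essentially the same as the paper's. But the converse has two genuine problems.

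First, a minor one: the claim that ``the quotient of a $PD$-group of type $FP$ by a normal $PD$-subgroup is again a $PD$-group'' is false as stated, since $\pi/A$ typically has torsion (whenever the base orbifold has cone points) and hence cannot be a $PD_2$-group. What you need is that $\pi/A$ is \emph{virtually} a $PD_2$-group, and the paper obtains this by computing $H^2(\pi/A;\mathbb{Z}[\pi/A])\cong\mathbb{Z}$ from the Lyndon--Hochschild--Serre spectral sequence and then invoking Bowditch's theorem. Your derivation of $\chi(\pi)=0$ via multiplicativity also presupposes that $\beta$ has a well-defined Euler characteristic, which is part of what is at issue.

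Second, and this is the fatal gap: Kervaire's conditions are \emph{not} sufficient for a group to be a $2$-knot group. Kervaire's realisation theorem produces $n$-knots only for $n\geq3$; in dimension $2$ the conditions are necessary but there are further obstructions. So you cannot simply ``fix such a $2$-knot $K$'' once the Kervaire conditions are verified. The paper sidesteps this completely by reversing your order of operations: once $\pi/A$ is known to be virtually a $PD_2$-group, one builds the Seifert fibred $4$-manifold $M$ with $\pi_1(M)\cong\pi$ directly, and then performs surgery on a simple closed curve in $M$ representing a normal generator of $\pi$. The result is a homotopy $4$-sphere, the cocore of the surgery is the desired $2$-knot $K$, and $M(K)\cong M$ \emph{by construction}. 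This simultaneously produces $K$ and eliminates the rigidity problem you correctly identify as the most delicate step in your approach.
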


\begin{proof}
The conditions are obviously necessary.
Suppose that they hold.
Then $\chi(\pi)=0$, and so $\beta_1(\pi)=\frac12\beta_2(\pi)+1>0$.
Since $H_1(\pi;\mathbb{Z})$ is cyclic, 
we must have $H_1(\pi;\mathbb{Z})\cong\mathbb{Z}$,
and then $H_2(\pi;\mathbb{Z})=0$, 
by Poincar\'e duality and the Universal Coefficient Theorem.
Thus $\pi$ satisfies the Kervaire conditions.
It follows from the cohomology Lyndon-Hochschild-Serre spectral sequence for $\pi$ 
as an extension of $\pi/A$ by $A$ that $H^2(\pi/A;\mathbb{Z}[\pi/A])\cong\mathbb{Z}$.
Therefore $\pi/A$ is virtually a $PD_2$-group \cite{Bo},
and so $\pi=\pi_1(M)$, where $M$ is a Seifert fibred 4-manifold.
Surgery on a simple closed curve in $M$ representing a normal generator of $\pi$
gives a homotopy 4-sphere.
The cocore of the surgery is a 2-knot $K$ with $M(K)\cong{M}$ and $\pi{K}\cong\pi$.
\end{proof}

The monodromy of a Seifert fibration $p:M\to{B}$ with total space a 4-manifold,
base an aspherical 2-orbifold and general fibre a torus,
is {\it diagonalizable\/} if it is generated by a matrix which
is conjugate to a diagonal matrix in $GL(2,\mathbb{Z})$.
The possible base orbifolds and monodromy actions are largely determined in the next result.
(In Theorem 16.2 of \cite{Hi} it is shown that the knot manifolds are $s$-cobordant to
geometric 4-manifolds.)

\begin{theorem}
Let $K$ be a $2$-knot with group $\pi=\pi{K}$,
and such that the knot manifold
$M=M(K)$ is Seifert fibred, with base $B$.
If $\pi'$ is infinite then $M$ is aspherical and $B$ is either
 \begin{enumerate}
\item$S^2(a_1,\dots,a_m)$, with $m\geq3$, 
no three of the cone point orders $a_i$ have a nontrivial common factor, 
at most two disjoint pairs each have a common factor, and trivial monodromy; or
\item$P^2(b_1,\dots,b_m)$ with $m=2$ or $3$,
the cone point orders $b_i$ being pairwise relatively prime, and $b_1=2$ if $m=3$,
or  $P^2(3,4,5)$,
and monodromy of order $2$ and non-diagonalizable; or
\item$\mathbb{D}(c_1\dots,c_p,\overline{d_1},\dots,\overline{d_q})$,
with the cone point orders $c_i$ all odd and relatively prime,
and at most one of the $d_j$ even,  $p\leq2$ and $2p+q\geq3$, 
and monodromy of order $2$ and diagonalizable.
\end{enumerate}
All such knot manifolds are mapping tori,
and are geometric.
\end{theorem}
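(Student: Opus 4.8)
The plan is to reduce to the aspherical case, apply Theorem 4 to determine the base orbifold, and then extract the monodromy from the constraint $H_1(M)\cong\mathbb{Z}$ together with the arithmetic of $GL(2,\mathbb{Z})$; the last ingredient is the deduction that $M$ is a mapping torus and is geometric.

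I would begin with the reduction. A Seifert fibration here has torus fibre, giving an extension $1\to A\to\pi\to\beta\to1$ with $\beta=\pi^{orb}(B)$; orientability of $M$ together with the ``orientable type'' requirement of \S1 rules out a Klein-bottle fibre, so $A\cong\mathbb{Z}^2$. Since $H_1(\beta)$ is a quotient of $H_1(\pi)\cong\mathbb{Z}$ it is cyclic, and as no closed $2$-orbifold group has infinite cyclic abelianisation it is in fact finite; hence $\beta'$ has finite index in $\beta$, and $\pi'$, being an extension of $\beta'$ by $\pi'\cap A$, is finitely generated. By the criterion of \cite{Hi} for asphericity of $2$-knot manifolds --- a $2$-knot with finitely generated infinite commutator subgroup has aspherical knot manifold --- $M$ is aspherical and $B$ is an aspherical $2$-orbifold. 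As $\beta$ is a quotient of the weight $1$ group $\pi$ it has weight $1$, so Theorem 4 shows that $B$ lies in its cases (1)--(4), which are precisely the orbifolds in (1), (2) and (3) of the present statement.

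The heart of the argument is the monodromy $\theta:\beta\to GL(2,\mathbb{Z})=\mathrm{Aut}(A)$. Orientability of $M$ gives $\det\theta=w_1^{orb}(B)$ (from the splitting of $TM$ into the vertical tangent bundle and the pull-back of the tangent bundle of $B$). The decisive point is that $H_1(M)\cong\mathbb{Z}$ forces the coinvariant group $A_\beta$ to be infinite: in the five-term exact sequence of the extension $A_\beta$ surjects onto the kernel of $H_1(M)\to H_1(\beta)$, which is infinite cyclic since $H_1(\beta)$ is finite. Put $\beta^+=\ker w_1^{orb}(B)$; then $\theta(\beta^+)\subseteq SL(2,\mathbb{Z})$, and since $\beta^+=\pi^{orb}$ of the orientation double cover of $B$ --- in every case on the list a sphere with cone points --- it is generated by torsion elements. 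If $\theta(\beta^+)$ were infinite it would contain a hyperbolic element (otherwise its image in $PSL(2,\mathbb{R})$ would be elementary of elliptic or parabolic type, whose torsion generates only a finite subgroup). But a hyperbolic matrix, like every element of $SL(2,\mathbb{Z})$ of order $3$, $4$ or $6$, and like $-I$, has no nonzero invariant vector, so $A_\beta$ would be finite. Hence $\theta(\beta^+)=1$. In case (1), where $B$ is orientable, the monodromy is therefore trivial. In cases (2) and (3), $\theta$ factors through $\beta/\beta^+\cong\mathbb{Z}/2$ and is nontrivial (else $M$ would be non-orientable), so it has order $2$, generated by an involution of determinant $-1$ with eigenvalues $1$ and $-1$.

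The most delicate remaining point, I expect, is the diagonalisability dichotomy: that this involution is not $\mathbb{Z}$-diagonalisable when $B=P^2(\dots)$ but is $GL(2,\mathbb{Z})$-conjugate to $\mathrm{diag}(1,-1)$ when $B=\mathbb{D}(\dots)$. This should follow from a closer analysis of the local structure of the Seifert fibration near the reflector curves in the disc case, and near the crosscap in the projective-plane case, paralleling the treatment of Seifert geometries in \cite{Hi}. Finally, for the closing assertions, $\pi'\cap A$ is infinite cyclic because the regular fibre carries $H_1(M)$ (so $A\to\pi/\pi'$ is onto); hence $\pi'$ is a finitely presentable $PD_3$-group and $M$ fibres over $S^1$, a mapping torus of a self-homeomorphism of a Seifert fibred $3$-manifold. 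And by Theorem 16.2 of \cite{Hi}, $M$ is $s$-cobordant to a geometric $4$-manifold, which, $\pi$ being a good group, is homeomorphic to $M$; so $M$ is itself geometric.
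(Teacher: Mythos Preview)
Your overall strategy---reduce to the aspherical case, invoke Theorem~4, then pin down the monodromy---matches the paper's. Your argument that $\theta(\beta^+)=1$ via the coinvariants $A_\beta$ and the structure of torsion in $SL(2,\mathbb{Z})$ is a legitimate alternative to the paper's route, which instead uses $A\cap\pi'\cong\mathbb{Z}$ (from Theorem~16.2 of \cite{Hi}) to see directly that every $\Theta(g)$ has $+1$ as an eigenvalue, whence orientation-preserving elements act trivially.

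There is, however, a genuine gap: the diagonalisability dichotomy. You say only that it ``should follow from a closer analysis of the local structure of the Seifert fibration''. The paper's argument is algebraic and quite different in the two cases. For $B=\mathbb{D}(\dots)$ the reflector generators $x_j$ have order~$2$ in $\beta$, so the preimage of $\langle x_j\rangle$ in $\pi$ is an extension of $\mathbb{Z}/2$ by $A\cong\mathbb{Z}^2$; since $\pi$ is torsion-free this extension is torsion-free, and for the swap involution $\left(\begin{smallmatrix}0&1\\1&0\end{smallmatrix}\right)$ one computes $A^{\Theta(x_j)}=(1+\Theta(x_j))A$, so every lift of $x_j$ is conjugate to one whose square is trivial---hence only the diagonalisable involution is possible. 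For $B=P^2(\dots)$ the generator $u$ does \emph{not} have order~$2$ in $\beta$ (rather $u^2=x_1\cdots x_m$), so the torsion argument is unavailable; instead the paper writes out the presentation of $\pi$ assuming $\Theta(u)=\left(\begin{smallmatrix}1&0\\0&-1\end{smallmatrix}\right)$ and checks that $\pi/\pi'$ then has $2$-torsion, contradicting $H_1(M)\cong\mathbb{Z}$. You would need to supply both arguments.

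Two smaller remarks. First, your appeal to a criterion ``a $2$-knot with finitely generated infinite commutator subgroup has aspherical knot manifold'' is not quite how the paper proceeds: it argues that if $M$ were not aspherical then $\beta$ would be finite, hence $\pi$ virtually abelian, hence $\pi'$ finite by Theorem~15.14 of \cite{Hi}. Second, for the geometric conclusion the paper invokes Ue's theorem \cite{Ue} (finite monodromy implies geometric), which is more direct than your $s$-cobordism route and avoids the question of whether $\pi$ is good in the sense of Freedman.
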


\begin{proof}
Let $A$ be the image of the fundamental group of the regular fibre in $\pi$.
Then $A$ is a finitely generated infinite abelian normal subgroup of $\pi$,
and $\beta=\pi^{orb}(B)\cong\pi/A$.
If $M$ were not aspherical then $\beta$ would be finite,
so $\pi$ would be virtually abelian. 
But then $\pi'$ would be finite, by Theorem 15.14 of \cite{Hi}.
Therefore $M$ is aspherical, so $A\cong\mathbb{Z}^2$
and $B$ is as in Theorem 4.
Hence $A\cap\pi'\cong\mathbb{Z}$ and $\beta/\beta'$ is finite, 
by Theorem 16.2 of \cite{Hi},
and so $\beta'$ and $\pi'$ are finitely presentable.
Therefore $\pi$ is virtually $\pi'\times\mathbb{Z}$.

Let $\Theta:\beta\to{GL}(2,\mathbb{Z})$
be the action of $\beta$ on $A$ induced by conjugation in $\pi$.
Since $M$ is orientable,
$\det\Theta(g)=-1$ if and only if $g\in\beta$ is orientation-reversing.
Since $A\cap\pi'\cong\mathbb{Z}$,
every element of $\beta$ has at least one eigenvalue $+1$.
Since $\beta/\beta'$ is finite cyclic, 
orientation-preserving elements of $\beta$ act trivially on $A$.

If $B=S^2(a_1,\dots,a_m)$ then $A$ is central in $\pi$,
and $\pi$ has the presentation
\[
\langle{x_1,\dots,x_m,y,z}\mid{x_i^{a_i}=y^{e_i}z^{f_i}}~{and}
~x_i\leftrightharpoons{y,z}~\forall{i}\leq{m},
~\Pi{x_i}=y^kz^l,\]
\[yz=zy\rangle,
\]
for some exponents $e_i,f_i$, for $1\leq{i}\leq{m}$, and $k,l$.
Conversely, if $\pi$ has such a presentation 
then $\pi/\pi'\cong\mathbb{Z}$ if and only if the $m+2$ minors 
\[
(k-\Sigma\frac{e_i}{a_i})\Pi{a_i},
~(l-\Sigma\frac{f_i}{a_i})\Pi{a_i},~\mathrm{ and}~
(kf_1-le_1+\Sigma_{j\not=1}\frac{e_1f_j-e_jf_1}{a_j})\Pi{a_i},
\]
\[
\dots,~
(kf_m-le_m+\Sigma_{j\not=m}\frac{e_mf_j-e_jf_m}{a_j})\Pi{a_i}
\]
have highest common factor 1.
If so, then  $(a_i,e_i,f_i)=1$ for all $i$,
and so $\pi$ is torsion free.

If $B=P^2(b_1,\dots,b_m)$, for some $m\geq2$,
then $\beta$ has the presentation
\[
\langle{u,x_1,\dots,x_m}\mid{u^2=x_1\cdots {x_m,}~x_i^{b_i}=1~\forall{i}\leq{m}}\rangle,
\]
where $u$ is the only orientation-reversing generator.
Since $\Theta(x_i)=I$ for $1\leq{i}\leq{m}$,
the final relation implies that $\Theta(u)^2=I$.
We may choose the basis for $A$ so that $\Theta(u)$ 
has one of the standard forms
$\left(\begin{smallmatrix}
1&0\\ 0&-1
\end{smallmatrix}\right)$
or  $\left(\begin{smallmatrix}
0&1\\ 1&0
\end{smallmatrix}\right)$. 
If $\Theta(u)$ is diagonalizable then $\pi$ has the presentation
\[
\langle{u,x_1,\dots,x_n,y,z}\mid{u^2=x_1\cdots x_my^kz^l,}~x_i^{b_i}=y^{e_i}z^{f_i}~
~\forall{i}\leq{m},
\]
\[
x_i\leftrightharpoons{y,z}~\forall{i}\leq{m},~uy=yu,~uzu^{-1}=z^{-1},~yz=zy\rangle,
\]
for some exponents $e_i,f_i$, for $1\leq{i}\leq{m}$, and $k,l$.
But then $\pi/\pi'$ has 2-torsion.
Therefore $\Theta(u)=
\left(\begin{smallmatrix}
0&1\\ 1&0
\end{smallmatrix}\right)$ and $\pi$ has the presentation
\[
\langle{u,x_1,\dots,x_m,y,z}\mid{u^2=x_1\cdots x_my^kz^l,}
~x_i^{b_i}=y^{e_i}z^{f_i}~~\forall{i}\leq{m},
\]
\[
x_i\leftrightharpoons{y,z}~\forall{i}\leq{m},~uyu^{-1}=z,~yz=zy\rangle.
\]
Such a group has abelianization $\pi/\pi'\cong\mathbb{Z}$ if and only if $(b_i,e_i+f_i)=1$, for $1\leq{i}\leq{m}$,
and either one exponent $b_i$ is even or $k+l+\Sigma(e_i+f_i)$ is odd.
If so, then $\pi$ is torsion free.

Suppose finally that $B=\mathbb{D}(c_1,\dots,c_p,\overline{d_1},\dots,
\overline{d_q})$, for some $p,q\geq0$ with $2p+q\geq3$. 
Then $\beta$ has the presentation
\[
\langle{w_1,\dots,w_p,x_1,\dots,x_{q+1}}\mid 
w_i^{c_i}=1~\forall~i\leq{p},~x_j^2=(x_jx_{j+1})^{d_j}=1~\forall~j\leq{q},
\]
\[x_{q+1}\Pi{w_i}=(\Pi{w_i})x_1
\rangle,
\]
where the generators $x_j$ are orientation-reversing.
Since the products $x_jx_{j+1}$ are orientation preserving and of finite order,
$\Theta(x_j)=\Theta(x_{j+1})$ for all $j\leq{q}$.
Since the subgroups generated by $x_j$ and $A$ are torsion-free,
we may choose the basis for $A$ so that 
$\Theta(x_j)=\left(\begin{smallmatrix}
1&0\\ 0&-1
\end{smallmatrix}\right)$ for all $j$.
Hence $\pi$ has the presentation
\[
\langle{w_1,\dots,w_p,x_1,\dots,x_{q+1},y,z}\mid 
{w_i^{c_i}=y^{e_i}z^{f_i}}~and~w_i\leftrightharpoons{y,z}~\forall{i}\leq{p},
\]
\[x_j^2=y~\forall~j\leq{q+1},~x_jzx_j^{-1}=z^{-1}~and~(x_jx_{j+1})^{d_j}
=y^{g_j}z^{h_j}~\forall~j\leq{q},
\]
\[
x_{q+1}\Pi{w_i}=(\Pi{w_i})x_1y^kz^l\rangle,
\]
for some exponents $e_i,\dots,h_j,k,l$.
Since $\pi$ is torsion-free, $(c_i,e_i,f_i)=1$ for all $i\leq{p}$.
Since $x_{q+1}^2=x_1^2$, $k=0$.
Extending coefficients to $\mathbb{Z}[\frac12]$ 
(localizing away from $(2)$),
we see that $\pi/\pi'$ is infinite cyclic 
if and only if $g_j=d_j$ for all $j\leq{q}$,
and then $\pi/\pi'$ has rank 1 and no odd torsion.
Reducing modulo $(2)$, we then see that $\pi/\pi'\cong\mathbb{Z}$ 
if and only if $e_j=d_j$ and $(2,d_j,h_j)=1$ for all $j\leq{q}$.

Since the monodromy is finite, these Seifert fibred manifolds are geometric, with geometry $\mathbb{N}il^3\times\mathbb{E}^1$ if the base is flat and geometry
$\widetilde{\mathbb{SL}}\times\mathbb{E}^1$ if the base is hyperbolic
\cite{Ue}.
Hence $M(K)$ fibres over $S^1$.
\end{proof}

Every torsion-free extension of $\beta$ by $\mathbb{Z}^2$ as in Theorem 9 
and with abelianization $\mathbb{Z}$ is the fundamental group of 
a Seifert fibred homology $S^1\times{S^3}$, 
which may be obtained by surgery on a knot in an homology 4-sphere.
Such an extension has weight 1 if and only if $\beta$ has weight 1.

If $K$ is the $r$-twist spin of a classical knot then the $r$th power of a meridian 
is central in $\pi{K}$, and so must have trivial image in $\beta$.
Since elements of finite order in 2-orbifold groups are conjugate to
cone point or reflector generators (see Theorem 4.8.1 of \cite{ZVC}), 
the only possible bases for Seifert fibred twist spins are
$S^2(a,b,r)$, with $(a,b)=1$, and $\mathbb{D}(\overline{d_1,\dots,d_q})$.
In the latter cases, $r$ must be 2.
These are realized by $r$-twist spins of $(a,b)$-torus knots and 2-twist spins 
of Montesinos knots $\mathfrak{m}(e;(d_1,\beta_1),\dots,(d_q,\beta_q))$,
respectively.

When $B=P^2(b_1,\dots,b_m)$ the knot manifold is also the total space 
of an $S^1$-bundle over a Seifert fibred homology $S^1\times{S^2}$,
since $\pi$ has an infinite cyclic normal subgroup 
$\langle{z}\rangle$ with torsion free quotient.
In the other cases whether this is so depends on 
the exponents $e_i,\dots,k,l$ in the relators.

There are no known examples of 2-knot manifolds which are total spaces
of orbifold bundles with flat bases and hyperbolic general fibre.
See \cite{Hi13} for a discussion of the possibilities.

\section{flat bases}

The three possible flat bases for Seifert fibred knot manifolds are
$S^2(2,3,6)$, $\mathbb{D}(\overline{3,3,3})$.
and $\mathbb{D}(3,\overline{3})$.
According to Theorem 6.11 of \cite{Hi89}, 
there should be no examples with base $\mathbb{D}(3,\overline{3})$.
However, in seeking to understand why that should be so,
we have recently realised that there was an error in the claim 
made there that certain $\mathbb{N}il^3$-lattices 
have essentially unique meridianal automorphisms.

Let $G=\pi_1(M(0;(3,1),(3,1),(3,1-3e))$, where $e$ is even.
Then $G$ has the presentation
\[
\langle{h,x,y,z}\mid{x^3=y^3=z^3=h,~xyz=h^e}\rangle
\]
\[
=\langle{x,z}\mid{x^3=(x^{3e-1}z^{-1})^3=z^3}\rangle.
\]
The centre of $G$ is $\zeta{G}=\langle{h}\rangle$, 
and $\overline{G}=G/\zeta{G}$ is the flat 2-orbifold group $\pi_1^{orb}(S^2(3,3,3))\cong\mathbb{Z}^2\rtimes{Z/3Z}$,
with translation subgroup $T\cong\mathbb{Z}^2$
generated by the images of $u=z^{-1}x$ and $v=xz^{-1}$.

The group of outer automorphism classes  $Out(G)$ is generated by 
the images of the automorphisms $b,r$ and $k$,
where
\[
b(x)=z^2x^{3e-4},\quad{b(z)=z},\quad{r(x)=x^{-1}},\quad{r(z)=z^{-1}},
\]
\[
k(x)=xz^{-1}x^{3e-2}\quad{and}\quad{k(z)=x}.
\]
In \cite{Hi89} the involution $r$ was called $j$, 
and it was asserted there that $jkj^{-1}=k^{-1}$.
This passed unchanged in \cite{Hi} (Theorem 16.15) and in \cite{Hi11}.
However, it should be $jk=kj$ (i.e., $rk=kr$),
and 
there are two classes of meridianal automorphisms of $G$,
up to conjugacy and inversion in $Out(G)$,
represented by $r$ and $rk$.
(This error did not affect the cases with parameter $\eta=-1$ 
in the earlier work.
In those cases the outer automorphism group is $(Z/2Z)^2$ 
and the meridianal class is unique.)

The automorphism $r$ leads to the group $\pi(e,1)$, considered
in \cite{Hi89,Hi,Hi11}.

\begin{theorem}
Let $e$ be even and let $\pi=G\rtimes_{rk}\mathbb{Z}$ 
be the group with presentation
\[
\langle{t,x,z}\mid{x^3}=(x^{3e-1}z^{-1})^3=z^3,~txt^{-1}=x^{-1}zx^{2-3e},~
tzt^{-1}=x^{-1}\rangle.
\]
Then
\begin{enumerate}
\item$\pi/\pi'\cong\mathbb{Z}$ and $\pi$ has weight $1$;
\item{the} centre of $\pi$ is generated by the image of $(t^3x)^2$;
\item{no} automorphism of $\pi$ sends $t$ to a conjugate of $t^{-1}$;
\item{all} automorphisms of $\pi$ are orientation preserving; and
\item{the} strict weight orbits for $\pi$ are parametrized by $\mathbb{Z}$.
\end{enumerate}
\end{theorem}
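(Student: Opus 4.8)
The plan is to read off all five assertions from the semidirect‑product description $\pi=G\rtimes_{rk}\mathbb{Z}$, reducing everything to arithmetic in $G$ and to the corrected structure of $Out(G)$.

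For (1), I would abelianise directly from the given presentation on $t,x,z$. The matrix of exponent sums has zero $t$-column, and two of its $2\times 2$ minors in the $x,z$-columns are $3e+1$ and $3-9e$; these are coprime because $e$ is even, so $3e+1$ is odd and prime to $3$. Hence, by the determinantal criterion recalled just before Theorem~4, $\pi/\pi'\cong\mathbb{Z}$. For weight $1$ I would adjoin the relation $t=1$: the quotient collapses to $\langle x\mid x^{6}=x^{3e+1}=1\rangle$, which is trivial since $\gcd(6,3e+1)=1$, so $t$ is a weight element. For (2), observe first that $rk(h)=rk(z^{3})=x^{-3}=h^{-1}$, using $rk(z)=x^{-1}$ and $x^{3}=z^{3}=h$. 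Consequently $\zeta\pi\cap G\subseteq\zeta G=\langle h\rangle$, and since conjugation by $t$ inverts $h$ this forces $\zeta\pi\cap G=1$; hence $\zeta\pi$ embeds in $\pi/G\cong\mathbb{Z}$ and is infinite cyclic. A direct calculation with the relations shows that conjugation by $t^{3}x$ is an involution of $\pi$, so $c:=(t^{3}x)^{2}$ lies in $\zeta\pi$ and has image $t^{6}$ in $\pi/G$. Conversely, if $gt^{n}\in\zeta\pi$ with $0<n<6$ then $(rk)^{n}$ is inner, i.e.\ $\overline{rk}^{\,n}=1$ in $Out(G)$; but $\overline{rk}=\bar r\bar k$ with $\bar r,\bar k$ commuting of orders $2$ and $3$, so $\overline{rk}$ has order $6$. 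Therefore the image of $\zeta\pi$ in $\pi/G$ is exactly $6\mathbb{Z}$ and $\zeta\pi=\langle c\rangle$.

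Items (3) and (4) rest on the fact that $G=\pi'$: the projection $\pi\twoheadrightarrow\pi/G\cong\mathbb{Z}$ factors through $\pi^{ab}\cong\mathbb{Z}$ as an isomorphism (both groups are generated by the class of $t$), so $G$ is the commutator subgroup and in particular characteristic. Thus every $\alpha\in Aut(\pi)$ restricts to some $\sigma=\alpha|_{G}\in Aut(G)$, sends $t$ to $gt^{\varepsilon}$ with $g\in G$ and $\varepsilon=\pm1$, and the relation $txt^{-1}=rk(x)$ forces $\bar\sigma\,\overline{rk}^{\,\varepsilon}\,\bar\sigma^{-1}=\overline{rk}$ in $Out(G)$. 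If $\varepsilon=-1$, this makes $\overline{rk}$ conjugate to $\overline{rk}^{-1}$ in $Out(G)$; working inside the cyclic subgroup $\langle\bar r,\bar k\rangle$ this amounts to some element of $Out(G)$ inverting $\bar k$, which the corrected structure of $Out(G)$ from \cite{Hi89} forbids (there $\bar r$ and $\bar k$ commute and $\bar k$ is not conjugate to $\bar k^{-1}$). Hence $\varepsilon=+1$ for every automorphism of $\pi$, which already yields (3), since a conjugate of $t^{-1}$ would force $\varepsilon=-1$. For (4), recall that $G$ is a $\mathbb{N}il^{3}$-lattice, so every automorphism of $G$ is realised by an affine diffeomorphism whose linear part lies in $Aut(\mathfrak n)$, $\mathfrak n$ the Heisenberg Lie algebra; the determinant of such a linear map is a perfect square, hence $+1$, so $\deg(\sigma)=+1$. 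The orientation character of $\alpha$ on the $PD_{4}$-group $\pi$ is then $\varepsilon\cdot\deg(\sigma)=+1$, proving (4).

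For (5), the conjugacy classes in $\pi$ of elements mapping to the distinguished generator of $\pi/G$ are precisely the $rk$-twisted conjugacy classes of $G$; the strict weight orbits are then obtained from the subset of such classes consisting of weight elements, on which $Aut(\pi)$ acts. Since $rk$ inverts $\zeta G=\langle h\rangle$ and, on a Heisenberg lattice, the action on the centre is the determinant of the action on the translation quotient $T\cong\mathbb{Z}^{2}$, the matrix of $\overline{rk}$ on $T$ has determinant $-1$; being of finite order it is conjugate to $\mathrm{diag}(1,-1)$, and its $+1$-eigenline contributes an infinite family of twisted conjugacy classes. A Wang-type analysis of $1\to\langle h\rangle\to G\to\bar G\to1$ should show that the set of $rk$-twisted conjugacy classes of $G$ is $\mathbb{Z}\oplus(\text{finite})$, the free summand coming from that eigenline. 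It then remains to (i) determine which of these classes consist of genuine weight elements, by the collapsing computation used for (1), and (ii) factor out the action of $Aut(\pi)$, which by the analysis of (3)--(4) acts modulo inner automorphisms through a finite quotient of $C_{Out(G)}(\overline{rk})$ preserving the $\overline{rk}$-eigenline. Carrying out (i) and (ii) so that the quotient comes out to be exactly $\mathbb{Z}$ — and not, say, $\mathbb{Z}$ modulo a finite group or $\mathbb{Z}\times(\text{finite})$ — is the step I expect to be the main obstacle; a secondary difficulty, underlying (3) and (4) as well, is making the corrected $Out(G)$ explicit enough to be certain that $\overline{rk}$ is not conjugate to its inverse.
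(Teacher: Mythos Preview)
Your treatment of (1)--(4) is essentially the paper's, only with more detail supplied where the paper says ``direct computation'' or invokes standard facts about $\mathbb{N}il^3$-lattices; the reductions to $Out(G)$ for (3) and (4) match the paper exactly.

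The genuine gap is in (5), and it is not only the incompleteness you flag. Your claim that the matrix of $\overline{rk}$ on $T$ is $GL(2,\mathbb{Z})$-conjugate to $\mathrm{diag}(1,-1)$ is false: $k$ acts trivially on $T$ (check $k(u)\equiv u$, $k(v)\equiv v$ modulo $\zeta G$), while $r$ sends $u=z^{-1}x$ to $zx^{-1}=v^{-1}$ and $v=xz^{-1}$ to $x^{-1}z=u^{-1}$, so $\Theta=\left(\begin{smallmatrix}0&-1\\-1&0\end{smallmatrix}\right)$, the \emph{non}-diagonalizable involution. This is not a cosmetic slip: had $\Theta$ been diagonalizable, $\mathrm{Coker}(\Theta-I)$ would be $\mathbb{Z}\oplus\mathbb{Z}/2\mathbb{Z}$, whereas for the swap-type involution it is exactly $\mathbb{Z}$, which is what the theorem asserts.

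More structurally, the paper bypasses your proposed Wang-type enumeration of all $rk$-twisted conjugacy classes in $G$. It first invokes Theorem~14.1 of \cite{Hi} to reduce any weight element with the correct abelianization image to the form $tg$ with $g\in\pi''=G'$, then uses the explicit automorphism $f$ of $\pi$ with $f|_G=\mathrm{id}$ and $f(t)=tx^3$ to pass to $\overline\pi=\pi/\zeta G$. After these two reductions one is comparing $tg$ and $tgh$ with $g,h\in T$, and a short calculation (using that $\theta$ inverts $G/\sqrt\pi$, hence any intertwining inner part lies in $\sqrt{\overline\pi}$, and that every $\Delta$ commuting with $\Theta$ is a power of $\Theta$) shows the orbits are exactly $\mathrm{Coker}(\Theta-I)\cong\mathbb{Z}$. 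Your steps (i) and (ii) are thus handled simultaneously by the reduction to $g\in G'$ and the passage modulo $\zeta G$; without these, separating genuine weight elements and quotienting by $Aut(\pi)$ is indeed awkward, and your diagonalizability error would propagate into a wrong final answer.
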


\begin{proof}
The image of $t$ freely generates the abelianization,
which is $\mathbb{Z}$ since $e$ is even.
Since $\pi$ is solvable it follows that $t$ normally generates $\pi$.
The second assertion is easily verified by direct computation.
Since $\theta=rk$ is not conjugate to $\theta^{-1}=rk^{-1}$ in $Out(G)$,
no automorphism of $\pi$ inverts $t$.
Since automorphisms of $\mathbb{N}il^3$-lattices are orientation preserving, 
it then follows that all automorphisms of $\pi$ are orientation preserving.

If $\tilde{t}$ is another normal generator  
with the same image in $\pi/\pi'$ as $t$ then
we may assume that $\tilde{t}=tg$ for some $g\in\pi''=G'$, 
by Theorem 14.1 of \cite{Hi}.
Since $\zeta{G}=\langle{x^3}\rangle$ and
$\pi$ has an automorphism $f$ such that $f(g)=g$ for $g\in{G}$
and $f(t)=tx^3$, we may work modulo $\zeta{G}$.
Let $\overline\pi=\pi/\zeta{G}$.
Then $\overline\pi'=\overline{G}\cong\pi_1^{orb}(S^2(3,3,3))$, 
and $T=\sqrt{G}/\zeta{G}=\sqrt\pi/\zeta\pi'$.
Since $\theta$ differs from the meridianal automorphism 
of  $\pi(e,1)'$ only by an automorphism 
which induces the identity on the subquotient $T$, 
the calculation is then as in \cite{Hi11}, for such groups.
However we shall give more details here.

Suppose that $tg$ and $tgh$ are two normal generators,
with $g,h\in\overline{\pi}''$. 
If there is an automorphism $\alpha$ of $\pi$ such that $\alpha(tg)=tgh$
then $\theta{c_{gh}}\gamma=\gamma\theta{c_g}$, 
where $c_g$ is conjugation by $g$ (etc.) and $\gamma=\alpha|_G$.
Hence the images of $\gamma$ and $\theta$ in $Out(\overline{G})$ commute,
and so $\gamma=c_j\delta$ for some automorphism 
$\delta\in\langle{r,k}\rangle$ and some $j\in\overline{G}$.
Since $\delta\theta=\theta\delta$ in $Aut(\overline{G})$, 
we have $c_{ghj}=\theta^{-1}c_j\theta\delta{c_g}\delta^{-1}$,
i.e., $ghj=\theta^{-1}(j)\delta(g)$.
Now $\theta$ induces inversion on $G/\sqrt\pi=Z/3Z$.
Hence $j$ must be in $\sqrt{\overline{\pi}}$,
and so $h=(\Theta^{-1}-I)([j])+(\Delta-I)(g)$
where $\Theta$ and $\Delta$ are the automorphisms of $T$
induced by $\theta$ and $\delta$.
Since $\Delta$ is a power of $\Theta$,
it follows that $h\in\mathrm{Im}(\Theta-I)$.
Conversely, if $h=(\Theta-I)(w)$ for some $w\in{T}$
then $w^{-1}tgw=tgh$.
Hence the weight orbits correspond to 
$\mathrm{Coker}(\Theta-I)\cong\mathbb{Z}$.
\end{proof}

These groups have 2-generator presentations,
but we do not know whether they have deficiency 0.

One could also construct these examples by considering torsion-free
extensions of $\beta=\pi_1^{orb}(\mathbb{D}(3,\overline{3}))$ by 
$A=\mathbb{Z}^2$ which are torsion free and have abelianization $\mathbb{Z}$.
Using the fact that the elements of finite order in $\beta$ 
must act on $A$ as in Theorem 9 leads to presentations
equivalent to those of Theorem 10.

If $G$ is embedded in the affine group $Aff(Nil)$, as in \S12 of \cite{Hi11},
then the automorphism $rk$ is induced by conjugation by the affine transformation $\Psi_w$
given by
$\Psi_w([x,y,z])=[-y+\frac13,-x-\frac13,w-z-\frac{x}3],
$ for all $[x,y,z]\in{Nil}$.
(The parameter $w$ corresponds to an element of the centre $\zeta{Nil}$.)
Since $\Psi_w$ normalizes the image of $G$ in $Aff(Nil)$,
it induces a self-homeomorphism $\psi_w$ of 
\[
N=Nil/G=M(0;(3,1),(3,1),(3, 1-3e)).
\]

\begin{cor}
If $K$ is a $2$-knot with $\pi{K}\cong\pi$ then $M(K)$ 
is a $\mathbb{N}il^3\times\mathbb{E}^1$-manifold.
No such $2$-knot is invertible or amphicheiral,
nor is it a twist spin.
\end{cor}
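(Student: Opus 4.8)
The plan is first to pin down the geometry of $M(K)$. The group $\pi$ is torsion-free polycyclic of Hirsch length $4$, hence an orientable $PD_4$-group (orientability from Theorem 10(4), or just because $M(K)$ is a knot manifold); it has weight $1$ by Theorem 10(1); and it has a normal subgroup isomorphic to $\mathbb{Z}^2$, for example $\langle x^3\rangle\times\zeta\pi$, where $\langle x^3\rangle=\zeta G$ is characteristic in $G$ and so normal in $\pi$, $\zeta\pi$ is central, and the two intersect trivially since $\zeta\pi$ injects into $\pi/\pi'\cong\mathbb{Z}$. By Theorems 8 and 9 (together with the construction in \S4) such a $\pi$ is the group of a $2$-knot whose knot manifold is the aspherical Seifert fibred $\mathbb{N}il^3\times\mathbb{E}^1$-manifold over the flat orbifold $\mathbb{D}(3,\overline{3})$; and since $\pi$ is a polycyclic $PD_4$-group, hence a good group, $M(K)$ --- being aspherical with this fundamental group --- is homeomorphic to it (see \cite{Hi}). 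In particular $M(K)$ is an $\mathbb{N}il^3\times\mathbb{E}^1$-manifold.

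Next I would deduce non-invertibility and chirality from the standard correspondence between symmetries of a $2$-knot and automorphisms of its group (see \cite{Hi}). Invertibility of $K$ requires an automorphism of $\pi K$ carrying a meridian $t$ to a conjugate of $t^{-1}$, and by Theorem 10(3) no automorphism of $\pi$ does this, so $K$ is not invertible. Amphicheirality of either sign requires an orientation-reversing automorphism of $\pi K$, and by Theorem 10(4) every automorphism of $\pi$ is orientation-preserving, so $K$ is neither $+$- nor $-$-amphicheiral.

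Finally I would exclude twist spins using the analysis following Theorem 9. Were $K$ an $r$-twist spin, $t^r$ would be central in $\pi$, hence would have trivial image in the base orbifold group $\beta=\pi_1^{orb}(\mathbb{D}(3,\overline{3}))$ (which has trivial centre), so the image $\bar t$ of $t$ in $\beta$ would be a torsion element normally generating $\beta$. But $\beta$ has no such element: up to conjugacy its torsion elements are the powers of the cone generator $v_1$, of the reflector $x_1$, and of $x_1x_2$, and the quotients of $\beta$ by the normal closures of these are $\mathbb{Z}/2$, $\mathbb{Z}/3$ and $\mathbb{Z}/6$. (Equivalently: by that analysis the base of a Seifert fibred twist spin must be $S^2(a,b,r)$ with $(a,b)=1$ or $\mathbb{D}(\overline{d_1,\dots,d_q})$, and $\mathbb{D}(3,\overline{3})$ --- lying over a disc and carrying a cone point --- is neither.) Hence $K$ is not a twist spin.

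The steps needing attention are routine: checking that $\langle x^3\rangle\times\zeta\pi$ is a normal $\mathbb{Z}^2$ with quotient $\pi_1^{orb}(\mathbb{D}(3,\overline{3}))$, and the small computations in $\beta$. The one point that is more than bookkeeping is invoking the symmetry correspondence with the correct orientation conventions, so that Theorem 10(3) on its own --- with no orientation hypothesis --- excludes invertibility while Theorem 10(4) is exactly what excludes amphicheirality of both signs; I do not expect a deeper obstacle, since the substantive content is already contained in Theorem 10.
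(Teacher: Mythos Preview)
Your argument is correct.  For the geometry of $M(K)$ and for non-invertibility and non-amphicheirality you proceed essentially as the paper does: both appeal to the rigidity of closed $4$-manifolds with torsion-free virtually polycyclic fundamental group of Hirsch length $4$ and Euler characteristic zero (Theorem~8.1 of \cite{Hi}), and both read the symmetry obstructions directly from parts (3) and (4) of Theorem~10.  Two small points of phrasing: the asphericity of $M(K)$ is part of the \emph{conclusion} of that rigidity theorem rather than a hypothesis you may simply assume; and in the twist-spin step the meridian of $K$ need not correspond to the presentation generator $t$ under the isomorphism $\pi K\cong\pi$, though your argument goes through verbatim with an arbitrary weight element in place of $t$.

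Where you genuinely diverge from the paper is in excluding twist spins.  The paper argues internally in $\pi$: since $\zeta\pi=\langle(t^3x)^2\rangle$ and $(t^3x)^2$ is not a power of any weight element $tg$ (with $g\in\pi''$), no meridian can have a central power.  You instead pass to the base: a central meridianal power has trivial image in $\beta=\pi_1^{orb}(\mathbb{D}(3,\overline{3}))$, so the image of the meridian would be a torsion normal generator of $\beta$, which you exclude by the short case check (equivalently, by the remark following Theorem~9 constraining the bases of Seifert fibred twist spins).  Your route is more structural and recycles the orbifold machinery already developed in \S3; the paper's route is a one-line computation inside $\pi$ that avoids having to identify $\pi/A$ with $\beta$ and to verify that $\zeta\beta=1$.
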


\begin{proof}
The group $\pi$ is the fundamental group of 
the mapping torus $N\rtimes_{\psi_w}{S^1}$, which is a
closed orientable $\mathbb{N}il^3\times\mathbb{E}^1$-manifold.
Since $\pi$ is polycyclic and of Hirsch length 4 and 
$\chi(M(K))=\chi(N\rtimes{S^1})=0$,
these manifolds are homeomorphic, 
by Theorem 8.1 of \cite{Hi}.

Parts (3) and (4) of the theorem imply that
no such $2$-knot is invertible or amphicheiral,
while no such knot is a twist spin,
since $(t^3x)^2$ is not a power of any weight element $tg$
(with $g\in\pi''$).
\end{proof}

The knot $K$ is reflexive if $M(K)$ has a self-homeomorphism which lifts to a self-homeomorphism of the mapping torus $M(\Psi_w)$ which twists the framing.
Are any of these knots reflexive?
(We suspect not. One potential complication in checking this is that the
fixed point set of $\psi_w$ is empty, for all $w\in\mathbb{R}$.)

\section{spherical and bad bases}

If the base orbifold $B$ is good and its orbifold fundamental group 
$\beta$ is finite then $B=S^2(a,a)$, $P^2(a)$, 
$\mathbb{D}(a)$, $\mathbb{D}(\overline{a},\overline{a})$,
$S(2,3,3)$, $S(2,3,4)$ or $S(2,3,5)$.
In each of the first three cases, $\beta$ is cyclic, 
while in the fourth case $\beta\cong{D_{2a}}$,
the dihedral group of order $2a$. 
In the remaining cases $\beta$ is tetrahedral,
octahedral or icosahedral, respectively.
It follows from the long exact sequence of homotopy that
the image $A$ of the fundamental group of the general fibre
in $\pi$ has rank 1.
Hence the knot group $\pi$ has two ends; 
equivalently, $\pi'$ is finite.

If $B$ is a bad 2-orbifold then $B=S^2(a)$, 
$S^2(a,b)$ or $\mathbb{D}(\overline{a},\overline{b})$, 
and $\beta$ is cyclic or dihedral.
In these cases, $\pi'$ is finite cyclic of odd order.

Every 2-knot group with finite commutator subgroup is
the group of a 2-knot $K$ such that $M(K)$ is an $\mathbb{S}^3\times\mathbb{E}^1$-manifold, and 
thus Seifert fibred.
Most, but not all, are the groups of 2-twist spins.
(See Chapters 11 and 15 of \cite{Hi}.)

\section{higher dimensions?}

Are there $n$-knots with aspherical, 
Seifert fibred knot manifold for any  $n>2$?
Even if we allow the general fibre to be 
an (orientable) infrasolvmanifold, rather than just a torus,
examples seem to be hard to find, perhaps because
one must check that $H_i(\pi;\mathbb{Z})=0$ for all $2\leq{i}\leq\lfloor\frac{n}2\rfloor+1$.
(This is feasible when $n=3$, for  if $M$ is an aspherical, orientable 5-manifold  
such that $\pi=\pi_1(M)$ is a knot group then the cocore of surgery on a 
representative of a normal generator of $\pi$ is a 3-knot in $S^5$.
Reversing the surgery shows that $M$ is a knot manifold.)

It can be shown that if  a 3-knot manifold $M$ 
is Seifert fibred (in this broader sense) 
then the base is non-orientable and the general fibre is the 3-torus
or the half-turn flat 3-manifold.

We note finally that if $n>3$ there is no algorithm to decide 
whether the groups of two orientable Seifert fibred $(n+2)$-manifolds 
(with general fibre the $n$-torus and base an orientable 2-orbifold) 
are isomorphic.
See Satz 4.8 of \cite{Zi85}.


\begin{thebibliography}{99}

\bibitem{Bo} Bowditch, B.H. Planar groups and the Seifert conjecture,

J. Reine u. Angew. Math. 576 (2004), 11--62.

\bibitem{BZ} Burde, G. and Zieschang, H. {\it Knots},

de Gruyter Studies in Mathematics 5, 
                                 
W. de Gruyter, Berlin -- New York. Second edition (2003).

\bibitem{Cr} Crowell, R. The annihilator of a link module,

Proc. Amer. Math. Soc. 15 (1964), 696--700.

\bibitem{Ga} Gabai, D. Foliations and the topology of 3-manifolds. III,

J. Diff. Geom. 26 (1987), 479--536.

\bibitem{Hi89} Hillman, J.A. {\it 2-Knots and their Groups,} 

Australian Mathematical Society Lecture Series 5, 

Cambridge University Press,  Cambridge -- Melbourne -- New York (1989).

\bibitem{Hi} Hillman, J.A. {\it Four-manifolds, Geometries and Knots},

Geometry and Topology Monographs, vol. 5,

Geometry and Topology Publications (2002 -- revision 2007).

Current version -- see www.sydney.edu.au/maths/u/jonh

\bibitem{Hi11} Hillman, J.A. $2$-knots with solvable groups,

J. Knot Theory Ramif. 20 (2011), 977--994. 

\bibitem{Hi13} Hillman, J.A. The groups of fibred 2-knots,

in {\it Geometry and Topology Down Under (Hyamfest proceedings)},

CONM 597, American Mathematical Society, to appear (August 2013).

\bibitem{Ho} Howie, J.  A proof of the Scott-Wiegold conjecture 
on free products of cyclic groups,
J. Pure Appl. Algebra 173 (2002), 167--176.

\bibitem{Le} Levine, J. Doubly slice knots and doubled disk knots,
Michigan J. Math. 30 (1983), 249--256.

\bibitem{Me} Meier, J. Small Seifert fibred surgery on hyperbolic pretzel knots,

arXiv:1210.7547 [math.GT] (2012).

\bibitem{MM} Miyazaki, K. and Motegi, K. Seifert fibred manifolds and Dehn surgery,

Topology 36 (1997), 579--603.

\bibitem{Mo} Moser, L. Elementary surgery along a torus knot,

Pacific J. Math. 38 (1971), 737--745.

\bibitem{Ue} Ue, M. Geometric 4-manifolds in the 
sense of Thurston and Seifert 4-manifolds I, 
J. Math. Soc. Japan 42 (1990), 511--540; II, 
{\it ibid} 43 (1991), 149--183.

\bibitem{Wu} Wu, Y-Q. Seifert fibred surgery on Montesinos knots,

arXiv:1207.1054 [math.GT] (2012).

\bibitem{ZVC} Zieschang, H., Vogt, E. and Coldewey, H.D. 
{\it Surfaces and Planar Discontinuous Groups},
Lecture Notes in Mathematics 835, 

Springer-Verlag, Berlin -- Heidelberg -- New York (1980).
\bibitem{Zi85} Zimmermann, B. Z\"ur Klassifikation h\"oherdimensionaler Seifertsche 
Faserr\"aume,
in {\it Low Dimensional Topology}, (edited by R.Fenn),

London Mathematical Society Lecture Note Series 95,

Cambridge University Press, Cambridge -- Melbourne -- New York (1985), 

214--255.

\end{thebibliography}
\end{document}